\begin{document}
\makeatletter
\def\@begintheorem#1#2{\trivlist \item[\hskip \labelsep{\bf #2\ #1.}] \it}
\def\@opargbegintheorem#1#2#3{\trivlist \item[\hskip \labelsep{\bf #2\ #1}\ {\rm (#3).}]\it}
\makeatother
\newtheorem{thm}{Theorem}[section]
\newtheorem{alg}[thm]{Algorithm}
\newtheorem{conj}[thm]{Conjecture}
\newtheorem{lemma}[thm]{Lemma}
\newtheorem{defn}[thm]{Definition}
\newtheorem{cor}[thm]{Corollary}
\newtheorem{exam}[thm]{Example}
\newtheorem{prop}[thm]{Proposition}
\newenvironment{proof}{{\sc Proof}.}{\rule{3mm}{3mm}}

\title{Some graph theoretical aspects\\ of generalized truncations}
\author{Brian Alspach and Joshua B. Connor\\School of Mathematical and Physical Sciences\\
University of Newcastle\\Callaghan, NSW 2308,
Australia\\brian.alspach@newcastle.edu.au\\joshconnor178@gmail.com}
\maketitle

\begin{abstract} A broader definition of generalized truncations of graphs is introduced followed
by an exploration of some standard concepts and parameters with regard to generalized
truncations.
\end{abstract}

\noindent {\sc\bf Keywords}: graph, multigraph, generalized truncation

\medskip

\noindent {\sc\bf AMS Classification}: 05C99

\section{Introduction}

Truncations of Platonic and Archimedean solids already were studied by the ancient Greeks.
It is worth observing the use of the term ``solid'' when considering truncations.  The
act of slicing off a corner of a solid allows an immediate and intuitive understanding of what
a truncation produces.  The skeletons of these solids, that is, the graphs formed by the vertices
and edges of these solids then inherit an obvious truncation.  This suggests that a notion of
truncation may be applied to arbitrary graphs. However, some care needs to be exerted
when extending the notion of truncation to arbitrary graphs for the following reason.  Upon
truncating a vertex of a solid, the $k$ dangling edges that were incident with the vertex that has been removed
then are joined as a $k$-cycle forming the boundary of a new face in a straightforward manner.
Thus, the temptation for an arbitrary graph would be to somehow join the dangling edges so
that they form a cycle.  Indeed, this has been the case in some instances where truncation
has been employed, but other graphs have been employed as well.  We now provide a
brief discussion of some of the history in spite of delaying the precise definition of a generalized
truncation. 

H. Sachs \cite{S1} seems to be the first modern graph theorist to have used truncation to
obtain graphs with specific properties.  He did not restrict the replacement graphs to be cycles,
but did use the same graph for each replacement, and used a Hamilton cycle in each to
organize the edges between the replaceent graphs.  His work was then extended by Exoo
and Jajcay in \cite{E1}.  The gap between those two papers is essentially fifty years.

Perhaps the best known graph truncation is the cube-connected cycles graph introduced in
\cite{P1}.  It is obtained by replacing each vertex of the $n$-dimensional cube
with an $n$-cycle.  The resulting graph is trivalent and has cube-like properties.  Closely
related to this is the truncation that replaces each vertex of an arc-transitive graph with a
cycle in such a way that a trivalent vertex-transitive graph is obtained.  This is exploited
nicely in \cite{E2,E1} and elsewhere.  Another paper dealing with replacing vertices by cycles is
\cite{D1}.

When the replacement graphs are cycles, if the order of the vertices along the cycles is
not handled with some care, desirable properties of the original graph may be lost.  That
problem is addressed in \cite{A1} by using complete graphs for the replacements.
Generalized truncations also appear several times in \cite{B2}.  They are used in articles
about graph expanders under the name zig zag product (for example, see \cite{R1}).

\medskip

The purpose of this paper is the introduction of a much broader definition of generalized
truncations of graphs and an exploration of some standard graph parameters in this setting.
We believe there is considerable scope for research in this topic and include eight research
problems we encountered.  

The terms {\it reflexive} and {\it multigraph} are used if loops and multiple edges,
respectively, are allowed.  Thus, a graph has neither loops nor multiple edges.  We use $V(X)$
to denote the set of vertices of a reflexive multigraph $X$ and $E(X)$ to denote the set of
edges.  The {\it order} of $X$ is $|V(X)|$ and the {\it size} of $X$ is $|E(X)|$.  Finally,
the {\it valency} of a vertex $u$, denoted $\mathrm{val}(u)$, is the number of edges
incident with $u$, where a loop contributes 2 to the valency.

Given a reflexive multigraph $X$, a {\it generalized truncation} of $X$ is obtained as follows via a
two-step operation. The first step is the {\it excision step}.  Let $M$ denote an auxiliary 
matching (no two edges have a vertex in common) of size $|E(X)|$.  Let
$F:E(X)\rightarrow M$ be a surjective function and for $uv\in E(X)$, label the ends of the
edge $F(uv)$ with $u$ and $v$.  Let $F(M)$ denote the vertex-labelled matching thus obtained.
So $F(M)$ represents the edges of $X$ completely disassembled.  Note that a loop at a vertex
$v\in V(X)$ produces an edge in $F(M)$ with both end vertices labelled $v$.

The second step is the {\it assemblage step}.  For each $v\in V(X)$, the set of vertices of $F(M)$ labelled
with $v$ is called the {\it cluster at} $v$ and is denoted $\mathrm{cl}(v)$.  Insert an arbitrary graph on
$\mathrm{cl}(v)$.  The inserted graph on $\mathrm{cl}(v)$ is called the {\it constituent graph at $v$}
and is denoted $\mathrm{con}(v)$. The resulting multigraph $$F(M)\cup_{v\in V(X)} \mathrm{con}(v)$$
is a {\it generalized truncation} of $X$.  We usually think of the labels on the vertices of
$F(M)$ as being removed following the assemblage stage, but there are many times when the labels
are useful in the exposition.  We use $\mathrm{TR}(X)$ to denote a
generalized truncation of the reflexive multigraph $X$.

Truncations arise via action involving the edges incident with a vertex.  Consequently, isolated
vertices are useless and we make the important convention that the reflexive multigraphs from
which we are forming generalized truncations do not have isolated vertices.  This will not be
mentioned in the subsequent material, but is required for the validity of a few statements.
Note that we claim that a generalized truncation may be a multigraph.  This issue is addressed in the
next section.

\medskip

A few words about ``style'' are in order.  There are two styles we recognize:  local theorems
and global theorems.  Some discussion and two examples should clarify the distinction we are
trying to make.

A {\it local theorem} is a result that is achieved by considering only the the consituent graphs.
A {\it global theorem} is a result that requires accounting for the structure of the multigraph
$X$ in carrying out the construction producing a generalized truncation.  This description is
admittedly a little fuzzy so let's consider two examples arising later in the paper.

Theorem \ref{euler} is a local theorem even though the hypotheses require that $X$ be
eulerian.  We consider it local because once we start with an eulerian multigraph the subsequent
construction requires only that we build constituents so that every vertex has odd valency.
The structure of $X$ has nothing to do with constructing the constituents.  On the other
hand, Theorem \ref{con} is global because the choices for edges for the constituents
depends heavily on the structure of $X$.
        
\section{Some Characterizations}

According to the definition above, a generalized truncation of a reflexive multigraph $X$ may be
a multigraph.  This follows because we start with a matching and then add graphs on specified
subsets of vertices.  Thus, loops do not arise in the assemblage stage.  Multiple edges
may arise but only if $X$ has loops.  

A natural question to ask is which multigraphs are generalized truncations of a reflexive
multigraph.  One obvious fact is that a generalized truncation contains a perfect matching, but
this is not sufficient as we shall see.  Given a multigraph $X$ and a set of edges $E'\subseteq
E(X)$, we use $X\setminus E'$ to denote the submultigraph obtained from $X$ by removing
the edges in $E'$.

\begin{thm}\label{nscond}  A multigraph $Y$ is a generalized truncation of a reflexive
multigraph if and only if $Y$ contains a perfect matching $M$ such that $Y\setminus M$ is
a graph.  
\end{thm}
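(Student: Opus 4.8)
The plan is to prove both directions of the biconditional, treating the forward direction (necessity) as a straightforward unpacking of the construction and the reverse direction (sufficiency) as the reconstruction of a suitable $X$.

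For the forward direction, suppose $Y = \mathrm{TR}(X)$ for some reflexive multigraph $X$. By construction $Y$ is built from the vertex-labelled matching $F(M)$ together with the constituent graphs $\mathrm{con}(v)$. First I would argue that $F(M)$ is itself a perfect matching of $Y$: every vertex of $Y$ lies in exactly one cluster $\mathrm{cl}(v)$ and is the labelled end of precisely one edge of $F(M)$, so $F(M)$ saturates $V(Y)$ and its edges are pairwise disjoint. Taking $M := F(M)$, the submultigraph $Y\setminus M = \bigcup_{v} \mathrm{con}(v)$ is a disjoint union of simple graphs (the constituents are inserted as graphs on the clusters), hence $Y\setminus M$ is a graph. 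The only subtlety is to confirm that deleting the matching edges does not leave any loops or multiple edges; since all loops and multi-edges of $Y$ come from $F(M)$ itself (constituents contribute neither), this is clear.

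For the reverse direction, suppose $Y$ contains a perfect matching $M$ with $Y\setminus M$ a graph. I would reconstruct $X$ by contracting the matching: define the vertices of $X$ to be the edges of $M$, and for each edge $e$ of $Y\setminus M$ joining a vertex in matching-edge $a$ to a vertex in matching-edge $b$, place an edge between $a$ and $b$ in $X$ (a loop if $a=b$). The key checks are that this $X$ has no isolated vertices (each matching edge meets at least one edge of $Y\setminus M$, which holds provided $Y$ has minimum degree at least one; I would address any degenerate cases) and that running the truncation construction on $X$ recovers $Y$. For the latter, I would take $F$ to send each edge of $X$ to its originating matching edge in $Y$, label the ends according to which endpoints they came from, and define the constituent at the cluster corresponding to $a$ to be exactly the graph induced on $a$'s endpoints by $Y\setminus M$.

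The main obstacle I anticipate is purely bookkeeping rather than conceptual: verifying that the clusters and constituent graphs produced by the contraction-then-reconstruction exactly reproduce the incidences of $Y$, and in particular that multiplicities match. Because $Y\setminus M$ is required to be a \emph{graph}, no two parallel edges survive outside $M$, which is precisely what forces the constituents to be simple graphs as the definition demands; I would emphasize that this hypothesis is exactly what rules out the ``obvious'' counterexamples alluded to before the statement, where a perfect matching alone is insufficient. Establishing a clean bijection between $V(Y)$, the labelled cluster vertices, and showing the reassembled edge multiset equals $E(Y)$ is the one place where care is needed to make the argument airtight.
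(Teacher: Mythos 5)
Your forward direction is correct and matches the paper: $F(M)$ is a perfect matching of $Y$, and $Y\setminus F(M)$ is the disjoint union of the constituents, hence a graph.

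The reverse direction, however, contains a genuine error: you contract the wrong objects, and this swaps the roles of vertices and edges of the source. In a generalized truncation the perfect matching corresponds to the \emph{edges} of $X$ and the constituents to the \emph{vertices} of $X$, so any reconstruction must produce an $X$ whose edge set is in bijection with $M$. Your $X$ instead has $V(X)$ in bijection with $M$ and $E(X)$ in bijection with $E(Y)\setminus M$, which is backwards. Concretely, any generalized truncation of your $X$ has $2|E(X)| = 2|E(Y)\setminus M|$ vertices, while $Y$ has $2|M|$ vertices, and these counts disagree in general: take $Y=K_4$ with $M$ one of its perfect matchings, so $Y\setminus M = C_4$; your $X$ is two vertices joined by four parallel edges, and every generalized truncation of it has $8$ vertices, so none of them is $K_4$. (The same mismatch shows in your final step: the cluster at a vertex $a$ of your $X$ has size equal to the number of edges of $Y\setminus M$ meeting the matching edge $a$, not size $2$, so ``the graph induced on $a$'s endpoints'' does not even live on that cluster.) The correct reconstruction, which is the paper's, contracts each \emph{component} of $Y\setminus M$ to a single vertex and deletes the loops created by the edges of $E(Y)\setminus M$; the edges of $M$ survive as the edges of $X$ (an $M$-edge inside one component becoming a loop, which is why $X$ is allowed to be reflexive), the clusters are the component vertex sets, and the constituents are the components themselves, recovering $Y$ exactly.
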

\begin{proof} If $Y$ is a generalized truncation of some reflexive multigraph $X$, then it
contains the edges of $F(E(X))=F(M)$ and this forms a perfect matching in $Y$.  If we remove
the edges of $F(M)$ from $Y$, the resulting submultigraph $Y\setminus F(M)$ is a graph by
definition because the constituents partition the vertex set of $Y\setminus F(M)$.

For the other direction, let $Y$ be a multigraph containing a perfect matching $M$ such that
$Y\setminus M$ is a graph.  Let $A_1,A_2,\ldots,A_t$ be the components of $Y\setminus M$.
Perform a contraction on $Y$ by contracting each component $A_i$, $i=1,2,\ldots,t$, to a
single vertex.  Then remove every loop corresponding to the edges of $E(Y)\setminus M$.
The resulting reflexive multigraph $X$ has $Y$ as a generalized truncation.    \end{proof} 

\medskip

There are some facts we may derive from Theorem \ref{nscond} and its proof.  We
state them as separate corollories for clarity and as an algorithm.  Note that a multiple edge appears in
$\mathrm{TR}(X)$ only when there is an edge of $F(M)$ whose end vertices have the same
label, that is, the edge of $F(M)$ arose from a loop in $X$.  Moreover, because we insert
graphs during the assemblage stage, no edge in $\mathrm{TR}(X)$ may have multiplicity 3
or more.  From the theorem we see that the distinct edges of multiplicity 2 in $\mathrm{TR}(X)$ must
not share any vertices.  This proves the following corollary which actually is a reformulation
of Theorem \ref{nscond}.

\begin{cor} A multigraph $Y$ is a generalized truncation of some reflexive multigraph $X$
if and only if $Y$ has no edges of multiplicity bigger than 2, the edges of multiplicity 2 form
a matching, and there is a perfect matching containing all the edges of multiplicity 2.
\end{cor}

Theorem \ref{nscond} informs us when a multigraph is a generalized truncation of a reflexive
multigraph but we now restrict ourselves to multigraphs for the following reason.  If $Y$ is a
generalized truncation of a reflexive multigraph $X$ of size $m$, then $Y$ clearly is a
generalized truncation of the reflexive multigraph with a single vertex and $m$ loops.
This follows because every vertex of the $m$-matching arising in the excision stage has
the same label which enables use to insert any graph on the $2m$ vertices.  Because
of this we now exclude consideration of loops, that is, we consider only generalized truncations
arising from multigraphs and graphs.  Thus, the generalized truncations themselves always
are graphs.

\begin{defn}\label{source}{\em Given a graph $Y$, define the} source of $Y$, {\em denoted
$\mathrm{src}(Y)$, by $\mathrm{src}(Y)=\{X: Y\mbox{ is a generalized truncation of $X$ and $X$ is a multigraph}\}$.}
\end{defn}

\begin{defn}\label{ipm}{\em A perfect matching $M$ in a graph $Y$ is called} isolating
{\em if no edge of $M$ has both end vertices in the same component of $Y\setminus M$.}
\end{defn} 

The proof of one direction of Theorem \ref{nscond} is algorithmic so that we list the steps for
finding a multigraph in $\mathrm{src}(Y)$.

\begin{description}
\item[Step 1.] Find an isolating perfect matching $M$ in $Y$.  If there is none, then $Y$ is not
a generalized truncation of a multigraph,
\item[Step 2.] If there is an isolating perfect matching $M$ in $Y$, let $A_1,A_2,\ldots,A_t$ be
the components of $Y\setminus M$.
\item[Step 3.] Contract each set $A_i$, $i=1,2,\dots,t$, in $Y$ to a single vertex and remove all
the loops formed.  The remaining multigraph $X$ belongs to $\mathrm{src}(Y)$.
\end{description}

It is natural to wonder when $\mathrm{src}(Y)$ contains a graph.  This does impose
an additional restriction on the isolating perfect matching $M$.  Namely, there cannot be two edges
of $M$ whose end vertices are in the same pair of distinct components $A_i$ and $A_j$.
This proves the following corollary.
 
\begin{cor} The graph $Y$ is a generalized truncation of  a graph $X$ if and only if $Y$
contains an isolating perfect matching $M$ such that no edge of $M$ has both end vertices in the
same component of $Y\setminus M$, and there are no two edges of $M$ having their
end vertices in the same pair of distinct components of $Y\setminus M$.
\end{cor}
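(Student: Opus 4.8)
The plan is to treat this corollary as the loop-and-multiplicity refinement of the contraction construction used in the proof of Theorem~\ref{nscond}. Since a graph is exactly a multigraph with no loops and no parallel edges, the two extra hypotheses on $M$ ought to correspond precisely to forbidding loops and forbidding parallel edges in the contracted source multigraph. So I would reuse the machinery of Theorem~\ref{nscond} almost verbatim and only verify that these two defects are avoided.

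For the forward direction, suppose $Y$ is a generalized truncation of a graph $X$ and take $M=F(M)$, the perfect matching produced in the excision step. First I would record that $Y\setminus M=\bigcup_{v}\mathrm{con}(v)$ carries edges only inside clusters, so every component of $Y\setminus M$ is contained in a single cluster $\mathrm{cl}(v)$. Because $X$ is a graph it has no loops, hence every edge of $X$ joins two distinct vertices and every edge of $M$ joins two distinct clusters, and therefore two distinct components; this is exactly the isolating condition. For the second condition, an edge of $M$ with ends in components $C_1\subseteq\mathrm{cl}(a)$ and $C_2\subseteq\mathrm{cl}(b)$ is the image under $F$ of the edge $ab\in E(X)$, so two distinct edges of $M$ with ends in the same pair $\{C_1,C_2\}$ would both be images of $ab$, forcing a parallel edge in $X$ and contradicting that $X$ is a graph.

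For the converse, I would feed the given $M$ into the contraction construction of Theorem~\ref{nscond}: contract each component $A_i$ of $Y\setminus M$ to a single vertex and delete the loops arising from $E(Y)\setminus M$. By that theorem the resulting reflexive multigraph $X$ already has $Y$ as a generalized truncation, so only the simpleness of $X$ remains to be checked. The isolating hypothesis says no edge of $M$ lies inside one component, so no edge of $M$ becomes a loop; the second hypothesis says no two edges of $M$ run between the same pair of distinct components, so no two edges of $M$ become a parallel pair. Hence $X$ is loopless and has no multiple edges, i.e.\ $X$ is a graph, which completes this direction.

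The part needing the most care is the observation in the forward direction that each component of $Y\setminus M$ lies wholly within one cluster, since in general the components can be strictly finer than the clusters. Once this confinement is established, the translation of the two component-level conditions into the loop-free and multiplicity-free statements about $X$ is immediate, and the corollary simply reads off the construction of Theorem~\ref{nscond}; I do not expect a genuine obstacle beyond this bookkeeping with the labels of $F(M)$ and the bijection between the edges of $M$ and the edges of $X$.
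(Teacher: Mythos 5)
Your proposal is correct and follows essentially the same route as the paper: the paper also derives this corollary by taking the contraction construction from the proof of Theorem~\ref{nscond} and observing that the isolating condition excludes loops while the condition on pairs of components excludes multiple edges in the contracted source. Your write-up merely makes explicit (both directions, plus the observation that components of $Y\setminus M$ refine the clusters) what the paper leaves as a one-line remark.
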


It is easy to see from the definition that in general a given reflexive multigraph has
many generalized truncations.  The other direction is more interesting and we state a general
problem that is wide open. 

\smallskip

{\bf Research Problem 1}:  What can we say about $\mathrm{src}(Y)$ for various
families of graphs?

\smallskip 

With regard to Research Problem 1, we can determine the graphs $Y$ that have a unique
source.  As a first step we prove the following lemma.

\begin{lemma}\label{srccon}If $Y$ is a graph for which $|\mathrm{src}(Y)|=1$, then $Y$ is
connected. 
\end{lemma}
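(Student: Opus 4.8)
The plan is to prove the contrapositive: if $Y$ is disconnected, then $|\mathrm{src}(Y)|\neq 1$. Write the components of $Y$ as $Y_1,\ldots,Y_k$ with $k\geq 2$. If $\mathrm{src}(Y)=\emptyset$ then $|\mathrm{src}(Y)|=0\neq 1$ and there is nothing to prove, so the substantive case is $\mathrm{src}(Y)\neq\emptyset$, where the goal becomes to exhibit two distinct elements of $\mathrm{src}(Y)$.

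First I would fix an isolating perfect matching $M$ of $Y$, which exists by Step 1 of the algorithm (equivalently by Definition \ref{ipm} together with Theorem \ref{nscond}) precisely because $\mathrm{src}(Y)\neq\emptyset$. Let $A_1,\ldots,A_t$ be the components of $Y\setminus M$. Since every edge of $Y$ lies inside a single component $Y_j$, so does every matching edge, and hence each $A_i$ is contained in a single $Y_j$. Each $Y_j$ is nonempty and its vertices survive in $Y\setminus M$, so $Y_j$ contains at least one of the $A_i$. I can therefore choose $A_p\subseteq Y_1$ and $A_q\subseteq Y_2$ lying in distinct components of $Y$; consequently no edge of $Y$---and in particular no edge of $M$---joins $A_p$ to $A_q$.

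Next I would produce two sources. The first, $X_0$, is the multigraph given by Step 3 of the algorithm: contract each $A_i$ to a single vertex and delete the resulting loops. By Theorem \ref{nscond} this lies in $\mathrm{src}(Y)$, and it has exactly $t$ vertices. The second, $X_1$, is obtained by the same contraction except that $A_p$ and $A_q$ are contracted \emph{together} to one vertex $w$, every other $A_i$ becoming its own vertex; thus $X_1$ has $t-1$ vertices.

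The step I expect to require the most care is verifying that $X_1\in\mathrm{src}(Y)$, which I would do by displaying $Y$ directly as a generalized truncation of $X_1$ in the sense of Definition \ref{source}. Take the auxiliary matching to be $M$ with $F$ the identity, let the cluster at $w$ be $A_p\cup A_q$ and every other cluster be the corresponding $A_i$, and let each constituent be the subgraph of $Y\setminus M$ induced on its cluster. Because no edge of $M$ joins $A_p$ to $A_q$, no matching edge falls inside the cluster at $w$, so no loop is created and the isolating requirement is respected; meanwhile the constituent at $w$ is just the disjoint union of the graphs induced on $A_p$ and on $A_q$, which is a legitimate (if disconnected) constituent. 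Reassembling returns exactly $Y$, so $X_1$ is a valid source. Finally, since $X_0$ has $t$ vertices and $X_1$ has $t-1$ vertices, they cannot be isomorphic, giving $|\mathrm{src}(Y)|\geq 2$ and completing the contrapositive.
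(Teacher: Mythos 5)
Your proof is correct and takes essentially the same approach as the paper: the paper also fixes an isolating perfect matching, takes the standard contraction as one source, and then produces a second, non-isomorphic source by amalgamating two pieces lying over different components of $Y$ into a single vertex (your $X_1$ is exactly the paper's amalgamated multigraph). The only difference is presentational---you phrase the second source as a coarser contraction and spell out the no-loop verification, which the paper leaves implicit.
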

\begin{proof} Let $Y$ be a graph for which $\mathrm{src}(Y)\neq\emptyset$ and $Y$ is not
connected.   We know that $Y$ has an isolating perfect matching $M$ so that $M$ restricted to
each component $\Gamma$ of $Y$ is an isolating perfect matching for $\Gamma$.  We then
obtain a multigraph $X_{\Gamma}$ which is a source for $\Gamma$.

The disconnected multigraph $X$ formed by the union of the $X_{\Gamma}$s over the components
of $Y$ belongs to $\mathrm{src}(Y)$.  If we now amalgamate two components of $X$ at a
single vertex, then this yields another element of $\mathrm{src}(Y)$ and the result follows.   \end{proof}

\bigskip

Let $\alpha K_n$ denote the complete multigraph for which every edge has multiplicity $\alpha$. 
When $\alpha=1$, simply write $K_n$.  In general, a {\it complete multigraph} is a multigraph
in which every pair of distinct vertices is joined by at least one edge and the multiplicities may
vary over the various edges.

\begin{lemma}\label{cmpltmlt} If $Y$ is a graph that is a generalized truncation with a unique
source $X$, then $X$ is a complete multigraph.
\end{lemma}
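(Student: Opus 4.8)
The plan is to argue by contradiction: assuming $X$ is a source of $Y$ that is not complete, I would produce a second, genuinely different source of $Y$, contradicting uniqueness. The engine is the amalgamation trick already used in the proof of Lemma \ref{srccon}, except that here I amalgamate two \emph{non-adjacent} vertices of $X$ rather than two components of a disconnected $Y$.

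First I would recall, from the construction in the proof of Theorem \ref{nscond}, the precise relationship between $X$ and $Y$. Having fixed the isolating perfect matching $M$ that produces $X$, the vertices of $X$ are exactly the components $A_1,\dots,A_t$ of $Y\setminus M$; the constituent at $A_i$ is the graph $A_i$ itself; and the edges of $X$ are the images of the edges of $M$, with an edge joining $A_i$ and $A_j$ for each edge of $M$ having one end in $A_i$ and the other in $A_j$.

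Now suppose $X$ is not complete. Then there are two distinct vertices $u=A_i$ and $v=A_j$ joined by no edge of $X$. The key observation—on which everything rests—is that there is then \emph{no} edge of $Y$ whatsoever between $A_i$ and $A_j$: no edge of $M$ joins them, by non-adjacency in $X$, and no edge of $Y\setminus M$ joins them because $A_i$ and $A_j$ are distinct components of $Y\setminus M$.

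Given this, I would form a new multigraph $X'$ by amalgamating $u$ and $v$ into a single vertex $w$. Because $u$ and $v$ are non-adjacent, no loop is created, so $X'$ is again a loop-free multigraph, and since it has one fewer vertex than $X$ we have $X'\neq X$. It then remains to verify that $Y$ is a generalized truncation of $X'$: I would re-use the same excised matching $F(M)$ and the same constituents as before, except that the constituent at $w$ is taken to be the disjoint union of $A_i$ and $A_j$. The absence of edges between $A_i$ and $A_j$ guarantees both that this is a legitimate constituent graph and that reassembling $F(M)$ with these constituents returns exactly $Y$. Hence $X'\in\mathrm{src}(Y)$, contradicting $|\mathrm{src}(Y)|=1$, and so $X$ must be complete. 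The only routine points to check are the valency–cluster bookkeeping, namely $\mathrm{val}_{X'}(w)=\mathrm{val}_X(u)+\mathrm{val}_X(v)=|V(A_i)|+|V(A_j)|$, and that no edges are gained or lost in the reassembly; the single genuine obstacle is the non-adjacency-implies-no-edges observation, which is precisely what makes the amalgamated constituent well defined.
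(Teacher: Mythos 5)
Your proposal is correct and is essentially the paper's own proof: the paper likewise observes that if two vertices $u,v$ of a source $X$ are non-adjacent, identifying them yields another multigraph of smaller order in $\mathrm{src}(Y)$, contradicting uniqueness. The only cosmetic difference is that you describe the source in the canonical form of Theorem \ref{nscond} (constituents equal to components of $Y\setminus M$), whereas in general a constituent is a cluster that may span several components; your argument goes through verbatim with ``cluster'' in place of ``component,'' so this is harmless.
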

\begin{proof}  Let $Y'$ be a graph that is a generalized truncation and let $X'\in\mathrm{src}(Y')$. 
If there are two vertices $u,v\in V(X')$ not joined by an edge, then we may identify $u$ and $v$
to obtain another multigraph of smaller order in $\mathrm{src}(Y')$.  The result now follows.   \end{proof}

\begin{thm}\label{usrc} Let $Y$ be a graph that is a generalized truncation.  If $Y$ has a unique
isolating perfect matching $M$ and there is at least one edge of $M$ joining any two components
of $Y\setminus M$, then $|\mathrm{src}(Y)|=1$. 
\end{thm}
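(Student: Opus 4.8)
The plan is to show that every element of $\mathrm{src}(Y)$ must arise from the single isolating perfect matching $M$, and that the stated adjacency hypothesis makes this construction completely rigid. First I would fix notation. Since $Y$ is a generalized truncation, $\mathrm{src}(Y)\neq\emptyset$ and, by Theorem \ref{nscond}, $Y$ possesses an isolating perfect matching; by hypothesis this is the unique one, namely $M$. Let $A_1,\ldots,A_t$ be the components of $Y\setminus M$ and let $X_0$ be the multigraph produced by Steps 1--3, whose vertices correspond to the $A_i$ and whose edges are exactly the edges of $M$ (no edge of $M$ becomes a loop, as $M$ is isolating). The hypothesis that some edge of $M$ joins every pair of distinct components says precisely that any two vertices of $X_0$ are adjacent, so $X_0$ is a complete multigraph.

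The next step is to argue that every $X\in\mathrm{src}(Y)$ uses $M$ as its matching. Let $N$ be the perfect matching of $Y$ formed by the edges of $Y$ that correspond, under the excision step, to the edges of $X$. Each edge of $N$ joins the clusters of the two endpoints of the corresponding edge of $X$; since $X$ is loopless these two clusters are distinct, and distinct clusters lie in distinct components of $Y\setminus N$ because constituent edges never cross from one cluster to another. Hence $N$ is isolating in the sense of Definition \ref{ipm}, and by the uniqueness hypothesis $N=M$.

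I would then identify every source with a coarsening of $X_0$. With the matching fixed to be $M$, the clusters of $X$ partition $V(Y)$, and each component $A_i$ of $Y\setminus M$ lies inside a single cluster, since its edges are constituent edges and these do not cross between clusters. Thus each cluster is a union of certain of the $A_i$, and no edge of $M$ can lie inside a cluster, for such an edge would correspond to a loop of the loopless multigraph $X$. Consequently the clusters determine a partition of $\{A_1,\ldots,A_t\}$ each of whose parts is an independent set of $X_0$, and $X$ is precisely the quotient of $X_0$ obtained by identifying the vertices in each part, in agreement with the reduction used in Lemma \ref{cmpltmlt}.

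Finally I would invoke completeness. Since $X_0$ is a complete multigraph, its only independent sets are singletons, so the only admissible partition is the partition into singletons, which returns $X_0$ itself. Therefore every source equals $X_0$ and $|\mathrm{src}(Y)|=1$. I expect the third paragraph to be the crux: the work lies in verifying that an arbitrary source really is a coarsening of the algorithm's output $X_0$ rather than some unrelated multigraph, that is, in pinning down that its clusters must be unions of the fixed components $A_i$ and that amalgamating two of these components is admissible only when they are non-adjacent in $X_0$.
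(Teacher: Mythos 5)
Your proof is correct and takes essentially the same route as the paper's: the uniqueness of the isolating perfect matching forces every source to use $M$ as its excision matching, and the hypothesis that an $M$-edge joins any two components of $Y\setminus M$ rules out two components lying in the same cluster, since such an edge would become a loop in the (loopless) source. The paper's proof is a terse two-sentence version of exactly this argument; your second and third paragraphs merely make explicit the steps it leaves implicit (that any source's excision matching is isolating and hence equals $M$, and that clusters are unions of components of $Y\setminus M$).
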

\begin{proof} Each component of $Y\setminus M$ corresponds to the same vertex label on the
ends of the edges of $M$ incident with vertices of the component.  Because there is at least
one edge of $M$ between two distinct components and a source multigraph has no loops, the
labels on the vertices of the different components are distinct.  The result now follows.     \end{proof}

\bigskip

Using Theorem \ref{usrc}, we see that the unique source of the cartesian product of $C_3$ and
$K_2$ is $3K_2$.  On the other hand, the hypothesis that $Y$ has a unique isolating perfect
matching is not necessary because the 4-cycle $C_4$ has two isolating perfect matchings even
though the unique source is $2K_2$.  Finally, the cartesian product of $P_4$, the path of order
4, and $K_2$ has an isolating perfect matching that yields $4K_2$ as a source, and it has an
isolating perfect matching that yields the multipath $2P_3$ as a source.  From this it is seen
that determining the isolating perfect matchings is a key towards progress on the research
problem.

\section{Connectivity} 

A fundamental question is when is a generalized truncation of a multigraph connected?  After
the excision stage of forming a generalized truncation of a multigraph $X$, the vertex-labelled
matching $F(M)$ is obtained.  Form an auxiliary labelled graph $\hat{X}$ by contracting each
edge of $F(M)$ to a vertex and label the vertex with the 2-set consisting of the labels of the
ends of the corresponding edge.  Form the edges of $\hat{X}$ by letting two labelled vertices
be adjacent if and only if their label sets have non-empty intersection. 

Let $Y$ be a generalized truncation of the multigraph $X$.  The {\it projection} of $Y$ into
$\hat{X}$ is the subgraph of $\hat{X}$ obtained by including an edge joining two vertices
with labels $\{x,y\}$ and $\{z,w\}$ if and only if there is an edge of $Y$ joining a vertex of
the edge with labels $x,y$ and a vertex of the edge with labels $z,w$.

\begin{thm}\label{con} The generalized truncation $Y$ of a multigraph $X$ is connected if
and only if the projection of $Y$ into $\hat{X}$ is connected.
\end{thm}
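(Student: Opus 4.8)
The plan is to prove both directions of the biconditional by relating walks in $Y$ to walks in the projection $\pi(Y)$, using the fact that vertices of $\hat{X}$ correspond bijectively to edges of the matching $F(M)$, which in turn correspond bijectively to the edges (after contraction, the vertices) of $Y$ under the identification that contracts each $F(M)$-edge to a single point. First I would fix notation: write $P$ for the projection of $Y$ into $\hat{X}$, and observe that the vertex set of $P$ is naturally in bijection with $E(F(M))$, hence with the perfect matching $M$ sitting inside $Y$. Every vertex of $Y$ lies on exactly one edge of $M$, so there is a natural surjection $\phi$ from $V(Y)$ onto $V(P)$ sending a vertex to the matching edge it lies on.

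For the forward direction, suppose $Y$ is connected; I want to show $P$ is connected. Take two vertices $a,b$ of $P$, corresponding to matching edges $e_a,e_b$ of $M$. Pick endpoints $u$ of $e_a$ and $w$ of $e_b$ in $Y$ and, using connectivity of $Y$, a walk $u = u_0, u_1, \ldots, u_k = w$. I would then trace this walk through $\phi$: each $u_i$ maps to a vertex $\phi(u_i)$ of $P$, and I claim consecutive images are either equal or adjacent in $P$. The key case analysis is on the edge $u_i u_{i+1}$ of $Y$: if it is the matching edge itself then $\phi(u_i)=\phi(u_{i+1})$, and if it is a non-matching edge then by the very definition of the projection there is an edge of $P$ joining $\phi(u_i)$ to $\phi(u_{i+1})$, since that edge of $Y$ joins a vertex of one $F(M)$-edge to a vertex of another. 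Deleting repetitions yields a walk in $P$ from $a$ to $b$, so $P$ is connected.

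For the reverse direction, suppose $P$ is connected; I want to show $Y$ is connected. Here the central observation is that each edge of $M$, being an edge of $Y$, is itself connected, so the two endpoints of any matching edge lie in the same component of $Y$; equivalently, the fibres $\phi^{-1}(a)$ are each contained in a single component of $Y$. Now take any two vertices of $Y$; they lie on matching edges $a,b$, and connectivity of $P$ gives a walk $a = a_0, a_1, \ldots, a_k = b$ in $P$. For each edge $a_i a_{i+1}$ of $P$, the definition of projection guarantees a concrete edge of $Y$ between some vertex of $F(M)$-edge $a_i$ and some vertex of $F(M)$-edge $a_{i+1}$; splicing these concrete edges together with the matching edges $a_i$ (which connect the two endpoints within each fibre) produces a genuine walk in $Y$ joining the two chosen vertices. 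Hence $Y$ is connected.

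The main obstacle I anticipate is bookkeeping in the reverse direction: the edge of $P$ from $a_i$ to $a_{i+1}$ only promises \emph{some} pair of incident $Y$-vertices, not that this pair meshes with the endpoint reached from the previous step. This is precisely why the observation that each matching edge is traversable within $Y$ matters — it lets me move freely between the two endpoints of a given $F(M)$-edge so that the arrival vertex of one projected edge can be ferried to the departure vertex of the next. Once that freedom is made explicit, the splicing is routine. I would present the argument by establishing the walk-correspondence lemma (images of walks are walks) cleanly in both directions and then reading off connectivity, rather than chasing a single long walk, to keep the case analysis transparent.
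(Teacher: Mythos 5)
Your proof is correct. The paper offers no argument here --- it states ``The trivial proof is left to the reader'' --- and your walk-correspondence argument (pushing walks forward along the map $\phi$ sending each vertex of $Y$ to its $F(M)$-edge, and splicing projected edges together with matching edges in the reverse direction, which is exactly the bookkeeping point that makes the converse work) is precisely the routine proof the authors had in mind.
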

\begin{proof} The trivial proof is left to the reader.    \end{proof}

\bigskip

The auxiliary graph $\hat{X}$ provides an obvious constructive method for producing a
connected generalized truncation of $X$.  Choose a spanning tree $\hat{T}$ of $\hat{X}$. 
For each edge of $\hat{T}$, insert one edge between the corresonding edges of $F(M)$.
It is easy to see that the result is a generalized truncation of $X$ which is itself a tree.

We follow the convention of not specifying the noun ``vertex'' when discussing the
vertex connectivity of a multigraph, whereas, we employ the word ``edge'' when discussing
the edge connectivity.  That is, we shall use the notations $k$-connected and $k$-edge-connected. 
Denote the connectivity and edge connectivity of a multigraph $X$ by
$\kappa(X)$ and $\kappa'(X)$, respectively.

The following material on connectivity applies the results of the classical theorems by
Menger that tell us that the minimum number of vertices that must be deleted from a
multigraph $X$ in order to separate two vertices $u,v\in V(X)$ equals the maximum
number of internally disjoint paths in $X$ whose terminal vertices are $u$ and $v$.
The edge analogue replaces ``number of vertices'' with ``number of edges,'' and
``internally disjoint'' with ``mutually edge-disjoint.''  Thus, a multigraph $X$ is $k$-connected
if and only if every pair of distinct vertices is joined by $k$ internally disjoint paths,
and is $k$-edge-connected if and only if every pair of distinct vertices is joined by $k$ mutually
edge-disjoint paths.  That is why the following proofs talk about paths joining vertices. 

\begin{thm}\label{econ} If $Y$ is a generalized truncation of a multigraph $X$, then
$\kappa'(Y)\leq\kappa'(X)$.
\end{thm}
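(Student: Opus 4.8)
The plan is to establish the inequality by exhibiting, for a minimum edge cut of $X$, a corresponding edge cut of $Y$ of the same size, and then invoking the edge-cut characterization of $\kappa'$. First I would recall the skeleton of the construction: the edges of $F(M)$ form a perfect matching of $Y$ that is in bijection with $E(X)$, while every constituent edge of $\mathrm{con}(v)$ lies entirely inside the cluster $\mathrm{cl}(v)$. Since we are now assuming $X$ has no loops, each matching edge $F(uv)$ has its two ends labelled with distinct vertices $u,v\in V(X)$, one lying in $\mathrm{cl}(u)$ and the other in $\mathrm{cl}(v)$.

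Next I would take a minimum edge cut of $X$, that is, a partition $V(X)=S\cup\bar S$ into nonempty parts whose set of crossing edges has size $\kappa'(X)$; such a cut exists by the edge analogue of Menger's theorem cited above. I would lift this partition to $Y$ by setting $A=\bigcup_{v\in S}\mathrm{cl}(v)$ and $B=\bigcup_{v\in\bar S}\mathrm{cl}(v)$. Because $X$ has no isolated vertices every cluster is nonempty, so both $A$ and $B$ are nonempty and $\{A,B\}$ is a genuine vertex bipartition of $Y$.

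The heart of the argument is then to count the edges of $Y$ running between $A$ and $B$. No constituent edge can contribute, since both ends of any edge of $\mathrm{con}(v)$ lie in the single cluster $\mathrm{cl}(v)$, hence on the same side. A matching edge $F(uv)$ runs between $A$ and $B$ precisely when exactly one of $u,v$ lies in $S$, i.e.\ precisely when the corresponding edge $uv$ is one of the crossing edges of $X$. Using the bijection between $F(M)$ and $E(X)$, the number of $A$--$B$ edges of $Y$ therefore equals the number of crossing edges of the cut in $X$, namely $\kappa'(X)$. Deleting these edges separates $A$ from $B$ in $Y$, so $Y$ has an edge cut of size $\kappa'(X)$, whence $\kappa'(Y)\le\kappa'(X)$.

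I expect the only real care needed---rather than a genuine obstacle---to be the bookkeeping that guarantees the lifted partition is proper (handled by the no-isolated-vertex convention) and that parallel edges of $X$ between the same pair $u,v$ are accounted for correctly: each such edge yields its own matching edge in $Y$, and each crosses exactly when $uv$ crosses, so multiplicities cause no discrepancy in the count. If one prefers the path formulation emphasized before the statement, the same lift shows that for a pair $u,v\in V(X)$ realizing the global edge connectivity and any choice of $a\in\mathrm{cl}(u)$, $b\in\mathrm{cl}(v)$, the crossing matching edges form an $a$--$b$ edge cut of size $\kappa'(X)$; this bounds the number of mutually edge-disjoint $a$--$b$ paths and again yields $\kappa'(Y)\le\kappa'(X)$.
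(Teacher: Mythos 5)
Your proof is correct and follows essentially the same route as the paper: both lift a minimum edge cut of $X$ to the cluster partition of $V(Y)$ and observe that the only crossing edges of $Y$ are the matching edges of $F(M)$ arising from the cut, giving an edge cut of $Y$ of size $\kappa'(X)$. Your version adds a bit more bookkeeping (the explicit edge count and the multiplicity remark) and absorbs the disconnected case into the partition formulation, which the paper instead treats separately, but these are cosmetic differences.
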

\begin{proof} It is clear that if $X$ is disconnected, then every
generalized truncation is disconnected.  So assume $X$ is connected and consider a
minimum edge cut $\mathcal{E}$.  The multigraph $X\setminus\mathcal{E}$ has two
components.  Let $A$ be the vertices of one component and $B$ be the vertices of the
other component.  It is clear that the only edges of any generalized truncation $Y$ of
$X$ which may have a label from $A$ and a label from $B$ are the edges of the
matching in $F(M)$ arising from $\mathcal{E}$.  Thus, these edges separate $Y$ into at least two
components.  The result now follows.      \end{proof}

\bigskip

The interesting problem that now arises is how we guarantee that a multigraph $X$ and a
generalized truncation of $X$ have the same edge connectivity.   The next lemma is useful
for subsequent results but first we have a definition followed by a discussion of a method
to be employed frequently.

\begin{defn}\label{cohes}{\em A generalized truncation is said to be} cohesive {\em when
every consituent is connected.}
\end{defn}

Given a path or a cycle in a graph $X$, we now discuss how to expand it to a path or cycle
in $Y=\mathrm{TR}(X)$.  Let $uvw$ be three successive vertices in a path $P$ in $X$.  The edges
$uv$ and $vw$ are in $F(M)$ and the two occurrences of $v$ give rise to two distinct vertices
$v(x)$ and $v(y)$ in $\mathrm{con}(v)$ which are the ends of the edges labelled with $v$.
If there is a path in $\mathrm{con}(v)$ from $v(x)$ to $v(y)$, then we can add this path to
the edges of $F(M)$ that arise from $P$.  If we are able to do this for each constituent, we
obtain a path in $Y$ based on $P$.  We call this {\it an expansion of} $P$ to $Y$.  It is
obvious what we mean by an expansion of a cycle. 

\begin{lemma}\label{useful} Let $Y$ be a cohesive generalized truncation of a multigraph $X$.
If $\mathcal{E}$ is an edge cut of $Y$ using only edges from $F(M)$, then the edges of $X$
corresponding to the edges of $\mathcal{E}$ form an edge cut of $X$.
\end{lemma}
\begin{proof} Let $Y$ and $\mathcal{E}$ be as hypothesised.  There is an edge of
$\mathcal{E}$ whose end vertices $x$ and $y$ are in different components of
$Y\setminus\mathcal{E}$ because $\mathcal{E}$ is an edge cut.  Also, $x$ and $y$ belong
to different constituents because the edges of $\mathcal{E}$ belong to $F(M)$.  Let
$x\in\mathrm{con}(u)$ and $y\in\mathrm{con}(v)$, respectively.

Let $\mathcal{E}'$ be the edges in $X$ corresponding to the edges of $\mathcal{E}$.
Assume that $\mathcal{E}'$ is not an edge cut of $X$.  Then there is a path $P$ in $X\setminus
\mathcal{E}'$ whose end vertices are $u$ and $v$.  The edges of $P$ belong to $M$ and
$E(X)\setminus\mathcal{E}'$.  Hence, these edges form a matching in $Y\setminus\mathcal{E}$
and successive edges share a label, say $w$.  There is a path in $\mathrm{con}(w)$ joining
the two vertices with the same label. This yields a path in $Y\setminus\mathcal{E}$ joining $x$
and $y$ which is a contradiction.  Therefore, $\mathcal{E}'$ is an edge cut in $X$ as claimed.     \end{proof}

\begin{defn}\label{cmplt}{\em A generalized truncation is called} complete {\em if every
constituent graph is complete.}
\end{defn} 

\begin{thm}\label{nkreg} If $X$ is a $k$-edge-connected multigraph, $k\geq 2$, then a
complete generalized truncation $Y$ of $X$ is $k$-edge-connected.
\end{thm}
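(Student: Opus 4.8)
The plan is to prove $\kappa'(Y)\ge k$; together with Theorem~\ref{econ} this is exactly the assertion. I would open by recording three facts that the hypotheses hand us for free. Since $X$ is $k$-edge-connected and (loops having been excluded) has at least two vertices, $X$ is connected and $\delta(X)\ge\kappa'(X)\ge k$, so every constituent of $Y$ is a complete graph $K_{d}$ with $d\ge k$ and every vertex of $Y$ has degree equal to the valency of its underlying vertex of $X$. A complete truncation is cohesive, so Lemma~\ref{useful} is available. Finally, $Y$ is connected (with every constituent complete the projection of $Y$ into $\hat X$ is all of $\hat X$, which is connected because $X$ is), so a minimum edge cut is a genuine nonempty cut.

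Now let $\mathcal{E}$ be a minimum edge cut and $(S,T)$ the induced bipartition of $V(Y)$. Writing $s_v=|\mathrm{cl}(v)\cap S|$ and $t_v=|\mathrm{cl}(v)\cap T|$, completeness of $\mathrm{con}(v)$ forces exactly $s_vt_v$ cut edges inside $\mathrm{con}(v)$, the remaining cut edges lying in $F(M)$; call $\mathrm{con}(v)$ \emph{split} when $s_v,t_v\ge 1$. Among all minimum edge cuts I would take $\mathcal{E}$ to minimize the number of split constituents, and then argue that this minimum is achieved only in configurations where $|\mathcal{E}|\ge k$ is forced.

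If no constituent is split then $\mathcal{E}\subseteq F(M)$, and Lemma~\ref{useful} turns the corresponding (equinumerous) set of edges of $X$ into an edge cut of $X$, whence $|\mathcal{E}|\ge\kappa'(X)\ge k$. The substantive case is a split $\mathrm{con}(v^{*})$, handled by an exchange argument: sliding the $S$-part of $\mathrm{con}(v^{*})$ entirely into $T$ removes its $s_{v^{*}}t_{v^{*}}$ internal cut edges while altering the number of cut matching-edges at those vertices by at most $+s_{v^{*}}$, a net change of at most $s_{v^{*}}(1-t_{v^{*}})\le 0$. Hence, as long as both sides stay nonempty, the slide yields another minimum cut with fewer split constituents, contradicting the choice of $\mathcal{E}$. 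The slide empties a side precisely when $S\subseteq\mathrm{cl}(v^{*})$, forcing $v^{*}$ to be the unique split constituent; there I would instead compute the boundary of $S$ directly as $|\mathcal{E}|=s_{v^{*}}d_{v^{*}}-2\binom{s_{v^{*}}}{2}=s_{v^{*}}(d_{v^{*}}-s_{v^{*}}+1)$, a concave function of $s_{v^{*}}$ on $[1,d_{v^{*}}-1]$ whose endpoint values $d_{v^{*}}$ and $2(d_{v^{*}}-1)$ are both at least $k$ because $d_{v^{*}}\ge k\ge 2$.

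The step I expect to be the crux is the split case: the clean idea that the matching edges of the cut already separate $X$ collapses as soon as a complete constituent straddles $(S,T)$, so one must show such straddling is costly, either through the uncrossing move above or an equivalent accounting. The two points needing care are the signed count of matching-edge crossings created versus destroyed under the slide, and the degenerate boundary computation, which is the only place the hypothesis $k\ge 2$ is genuinely used (to secure $2(d_{v^{*}}-1)\ge k$).
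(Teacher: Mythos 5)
Your proof is correct, but it proceeds by a genuinely different route from the paper. The paper argues via Menger's theorem: for each pair of vertices of $Y$ it constructs $k$ pairwise edge-disjoint paths, the hard case being two vertices $x,y$ in a constituent of order exactly $k$, where $k-1$ paths come from inside the complete constituent and the $k$-th is obtained by expanding a cycle or an Euler tour of a suitable eulerian subgraph of $X$ through the constituents. You instead work directly with a minimum edge cut $\mathcal{E}$ of $Y$: choosing it to minimize the number of split constituents, you either land in the case $\mathcal{E}\subseteq F(M)$, where Lemma~\ref{useful} transfers the cut to $X$ and gives $|\mathcal{E}|\geq\kappa'(X)\geq k$, or you uncross a split constituent by the sliding move (whose signed accounting $-s_{v^*}t_{v^*}+s_{v^*}=s_{v^*}(1-t_{v^*})\leq 0$ is right, since no matching edge has both ends in one cluster when loops are excluded), or you fall into the degenerate case $S\subseteq\mathrm{cl}(v^*)$, where the boundary computation $|\mathcal{E}|=s_{v^*}(d_{v^*}-s_{v^*}+1)\geq\min\{d_{v^*},2(d_{v^*}-1)\}\geq k$ is exact and is indeed the one place $k\geq 2$ is essential. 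All the supporting details check out: $Y$ is connected by Theorem~\ref{con} since complete constituents make the projection all of $\hat{X}$, every cluster has size $d_v\geq\delta(X)\geq\kappa'(X)\geq k$, and the slide strictly decreases the number of split constituents without increasing the cut size, yielding the contradiction you want. (One cosmetic point: Theorem~\ref{econ} is not actually needed, since $k$-edge-connectedness only requires the lower bound $\kappa'(Y)\geq k$.) Comparing the two: your cut-based argument is shorter, self-contained, and makes genuine use of Lemma~\ref{useful}, which the paper states as preparation but never explicitly invokes in its own proof; it also avoids the paper's somewhat delicate Euler-tour expansion and the verification that the expansion avoids previously used edges. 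The paper's argument, on the other hand, is constructive—it exhibits the $k$ edge-disjoint paths—and its template carries over almost verbatim to the vertex-connectivity analogue (Theorem~\ref{vconn}), which a cut-uncrossing argument would not do as directly.
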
 
 \begin{proof}  Let $Y$ be a generalized truncation of $X$.  Note that every constituent
graph has order at least $k$ because $X$ is $k$-edge-connected.

First choose two vertices $x$ and $y$ in the same constituent $\mathrm{con}(v)$.  If
$|\mathrm{con}(v)|>k$, then it is trivially the case that there are $k$ mutually edge-disjoint
paths whose terminal vertices are $x$ and $y$.  Hence, we assume that $|\mathrm{con}(v)|
=k$.  

Because the latter
subgraph is complete, we may choose the edge $xy$ and the 2-paths $xzy$, as $z$ runs
through the remaining vertices of $\mathrm{con}(v)$, to obtain $k-1$ mutually edge-disjoint
paths whose terminal vertices are $x$ and $y$.  If we find an additional path from $x$ to $y$ that is
edge-disjoint from the other paths, then we shall have shown that an edge-separating set for
the vertices $x$ and $y$ has cardinality at least $k$.

Let $uv$ and $wv$ be the two edges of $X$ giving rise to the vertices $x$ and $y$ in
$\mathrm{con}(v)$.  There are two edge-disjoint paths joining $u$ and $w$ in $X$ because
$X$ is 2-edge-connected.  If one of the paths does not contain $v$, then there is a cycle
containing the 2-path $uvw$.  Expansion of this cycle produces a path from $x$ to $y$
in $Y$ that uses none of the edges of the initial $k-1$ paths.  

On the other hand, if both paths in $X$ contain $v$, then the union of the two paths is an
eulerian subgraph $X'$ of $X$ in which $\mathrm{val}(v)=4$.  If the edge $uv$ does not
belong to $X'$, then choose one of the paths from $u$ to $w$ and replace the subpath
from $u$ to $v$ with the edge $uv$.  This results in a smaller eulerian subgraph in
which $\mathrm{val}(v)=4$.  We may repeat this operation for the edge $wv$ so that
we may assume that both $uv$ and $wv$ belong to $X'$.  Thus, there is an Euler tour
that starts with the edge $vu$ from $v$ to $u$ and finishes with the edge $wv$.  We
may then use expansion on this Euler tour and obtain a path from $x$ to $y$ in $Y$
that does not use any edge of the first $k-1$ paths.  To see this we need to consider
vertices of valency 4 in $X'$.

If $z\neq v$ has valency 4 in $X'$, then the four edges incident with $z$ correspond to
four distinct vertices of $\mathrm{con}(z)$ and it is easy to see that extension may be
achieved by two edges having no vertices in common in $\mathrm{con}(z)$.
On the other hand, the Euler tour passes through $v$ once in the interior of the tour.
This corresponds to two vertices of $\mathrm{con}(v)$ distinct from both $x$ and $y$.
The edge joining them is not used in any of the first $k-1$ paths and this gives us an
extension of the Euler tour that is another path from $x$ to $y$ in $Y$.

When $x$ and $y$ lie in different constituents $\mathrm{con}(u)$ and $\mathrm{con}(v)$,
respectively, the existence of $k$ edge-disjoint paths joining them in $Y$ is easy to establish.   
There are $k$ edge-disjoint paths in $X$ whose terminal vertices are $u$ and $v$.  Use
expansion to obtain $k$ edge-disjoint paths in $Y$ from $\mathrm{con}(u)$ to
$\mathrm{con}(v)$.  We then may use edges in each of the constituents to make the terminal
vertices of each path $x$ and $y$ because the constituents are complete graphs. 
This completes the proof.    \end{proof}

\begin{cor}\label{kreg} If $X$ is a $k$-regular, $k$-edge-connected multigraph, $k\geq 2$, then
a generalized truncation $Y$ of $X$ is $k$-edge-connected if and only if it is complete.
\end{cor}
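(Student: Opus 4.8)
The plan is to prove the two implications separately, taking advantage of the fact that one of them has essentially already been established. For the direction asserting that a complete generalized truncation is $k$-edge-connected, I would simply invoke Theorem \ref{nkreg}: since $X$ is $k$-edge-connected with $k\geq 2$ by hypothesis, every complete generalized truncation $Y$ of $X$ is $k$-edge-connected. No additional argument is required here, and in particular the $k$-regularity hypothesis is not needed for this direction.

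For the forward direction—that $k$-edge-connectivity of $Y$ forces $Y$ to be complete—I would argue by a degree count, and this is precisely where the extra hypothesis of $k$-regularity (absent from Theorem \ref{nkreg}) enters. First I would record that because $X$ is $k$-regular, every vertex $v$ of $X$ is incident with exactly $k$ edges, so the cluster $\mathrm{cl}(v)$, and hence the constituent $\mathrm{con}(v)$, has exactly $k$ vertices. Next I would observe that each vertex $x$ of $Y$ lies in a unique constituent and is the end of exactly one edge of the matching $F(M)$; writing $v$ for the vertex of $X$ with $x\in\mathrm{con}(v)$, this gives $\deg_Y(x)=1+\deg_{\mathrm{con}(v)}(x)$, where the degree is taken in the simple graph $\mathrm{con}(v)$.

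I would then invoke the standard inequality $\kappa'(Y)\leq\delta(Y)$. Since $Y$ is assumed $k$-edge-connected, this yields $\delta(Y)\geq k$, so every vertex $x$ satisfies $\deg_{\mathrm{con}(v)}(x)\geq k-1$. But $\mathrm{con}(v)$ is a graph on only $k$ vertices, so no vertex of it can have degree larger than $k-1$; hence every vertex of every constituent has degree exactly $k-1$. A graph on $k$ vertices in which every vertex has degree $k-1$ is $K_k$, so each $\mathrm{con}(v)$ is complete and $Y$ is complete by Definition \ref{cmplt}.

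There is no deep obstacle in this corollary: the content is really the bookkeeping observation that $k$-regularity pins the constituent order down to exactly $k$, after which the forward direction collapses to the extremal remark that a degree-$(k-1)$ graph on $k$ vertices must be $K_k$. The only points to state carefully are the application of $\kappa'(Y)\leq\delta(Y)$ and the degree decomposition $\deg_Y(x)=1+\deg_{\mathrm{con}(v)}(x)$, which relies on the fact that $Y$ is a graph and that $F(M)$ is a perfect matching of $Y$. Thus the corollary is best presented as a two-line deduction from Theorem \ref{nkreg} together with this degree argument.
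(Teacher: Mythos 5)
Your proposal is correct and takes essentially the same approach as the paper: one direction by invoking Theorem \ref{nkreg}, the other by using $k$-regularity to pin each constituent's order at $k$ and then applying $\kappa'(Y)\leq\delta(Y)$ via the degree decomposition $\deg_Y(x)=1+\deg_{\mathrm{con}(v)}(x)$. The paper merely phrases this second direction as a contrapositive (a non-complete constituent yields two nonadjacent vertices and hence a vertex of valency less than $k$ in $Y$), whereas you run the same degree argument directly.
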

\begin{proof} If every constituent is complete, then $Y$ is $k$-edge-connected by Theorem
\ref{nkreg}.   If there is a constituent graph $\mathrm{con}(v)$ which is not complete, then
there are two vertices $x$ and $y$ of $\mathrm{con}(v)$ which are not adjacent.  This implies
that $\mathrm{val}(x)<k$ in $Y$.  This, in turn, implies $\kappa'(Y)\leq k-1$.  By the
contrapositive, if $Y$ is $k$-edge-connected, then each constituent graph is complete.   \end{proof}

\begin{thm}\label{vconn} If $X$ is a k-connected multigraph, $k\geq 2$, then a complete
generalized truncation $Y$ of $X$ is k-connected.
\end{thm}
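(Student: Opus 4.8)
The plan is to mirror the proof of Theorem~\ref{nkreg}, replacing edge‑disjoint paths by internally disjoint paths and Menger's edge version by Menger's vertex version. First I would record that a $k$‑connected multigraph has minimum valency at least $k$ (the connectivity of a multigraph never exceeds its minimum valency), so every constituent of $Y$ is a complete graph $K_n$ with $n\ge k$, hence is itself $(n-1)$‑connected. Since the constituents are complete they are in particular connected, so every expansion we need will actually exist (each required connecting path in a constituent is just an edge). As in Theorem~\ref{nkreg} I would split into the case where the two chosen vertices $x,y$ lie in a common constituent and the case where they lie in different constituents, and in each case produce $k$ internally disjoint $x$--$y$ paths in $Y$.

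For $x,y$ in the same constituent $\mathrm{con}(v)$: if $|\mathrm{con}(v)|=n>k$, then $K_n$ already supplies $n-1\ge k$ internally disjoint paths and there is nothing to do, so the only real work is when $|\mathrm{con}(v)|=k$. Here the complete graph gives the edge $xy$ together with the $k-2$ two‑paths $xzy$, a total of $k-1$ internally disjoint paths that exhaust the vertices of $\mathrm{con}(v)$, and I need one more path whose interior avoids $\mathrm{con}(v)$. Letting $e,f$ be the two edges of $X$ at $v$ that produced $x$ and $y$, with other ends $u,w$, I note that the multigraph $X-v$ obtained by deleting $v$ is connected (its connectivity is at least $k-1\ge1$), so there is a $u$--$w$ path $Q$ in $X-v$; then $e,Q,f$ close up to a cycle $C$ through $e$ and $f$ that meets $v$ exactly once. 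Expanding $C$ but omitting the connecting path inside $\mathrm{con}(v)$ breaks the expanded cycle open at $v$ and yields an $x$--$y$ path whose interior avoids $\mathrm{con}(v)\setminus\{x,y\}$, so it is internally disjoint from the previous $k-1$ paths. This cycle‑through‑two‑edges device is what makes the vertex case cleaner than the edge case of Theorem~\ref{nkreg}: a cycle visits $v$ once, so there is no need for the Euler‑tour argument used there.

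For $x\in\mathrm{con}(u)$ and $y\in\mathrm{con}(v)$ with $u\ne v$: Menger gives $k$ internally disjoint $u$--$v$ paths $P_1,\dots,P_k$ in $X$. Because they are internally disjoint, their first edges at $u$ are distinct and their last edges at $v$ are distinct (two paths sharing an edge at $u$ would share an interior vertex), so expanding each $P_i$ produces a path in $Y$ from a vertex $a_i\in\mathrm{con}(u)$ to a vertex $b_i\in\mathrm{con}(v)$, with the $a_i$ pairwise distinct and the $b_i$ pairwise distinct. Moreover these expanded paths are pairwise vertex‑disjoint: distinct interior vertices of the $P_i$ give disjoint constituents, and each $P_i$ meets $\mathrm{con}(u)$ and $\mathrm{con}(v)$ only at its endpoints $a_i$ and $b_i$. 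Finally I would route $x$ to the $a_i$ inside the complete graph $\mathrm{con}(u)$ and $y$ to the $b_i$ inside $\mathrm{con}(v)$, using the direct edges $xa_i$ (and the trivial connection for whichever $a_i$ equals $x$), and similarly at $v$; concatenating yields $k$ internally disjoint $x$--$y$ paths.

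The hard part will be the disjointness bookkeeping in the second case, particularly the degenerate situation $|\mathrm{con}(u)|=k$ or $|\mathrm{con}(v)|=k$, where $\{a_1,\dots,a_k\}$ (respectively $\{b_1,\dots,b_k\}$) is the entire constituent and so $x$ (respectively $y$) must coincide with one of the entry vertices. I would check that the fan‑in still works: at most one $a_i$ can equal $x$, so only that path needs no prefix while every other path attaches through a single complete‑graph edge to its distinct interior vertex $a_i$, and the same holds at $v$. Verifying that the prefixes, the expanded middles, and the suffixes never share an interior vertex --- using that the expansion of each $P_i$ touches $\mathrm{con}(u)$ and $\mathrm{con}(v)$ only at $a_i$ and $b_i$ --- is routine, but it is the step where one must be careful.
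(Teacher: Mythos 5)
Your proposal is correct and follows essentially the same route as the paper's proof: the same two-case split, expansion of $k$ internally disjoint paths of $X$ plus fan-in through the complete constituents when $x,y$ lie in different constituents, and, when they share a constituent of order exactly $k$, the $k-1$ internal paths supplemented by one path routed through a $u$--$w$ path of $X$ avoiding $v$ (your ``cycle through $e$ and $f$'' is just the paper's path-plus-matching-edges construction phrased as a cycle expansion). The only difference is that you spell out the disjointness bookkeeping that the paper leaves implicit.
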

\begin{proof} Let $Y$ be a complete generalized truncation of the $k$-connected multigraph $X$.  
First consider two vertices $x$ and $y$ of $Y$
which belong to constituents $\mathrm{con}(u)$ and $\mathrm{con}(v)$, $u\neq v$, respectively.
There are $k$ internally disjoint paths from $u$ to $v$ in $X$.   Expanding the paths gives
us $k$ mutually vertex-disjoint paths from vertices of $\mathrm{con}(u)$ to vertices of
$\mathrm{con}(v)$ in $Y$.  We then use edges of each of the constituents to obtain $k$ internally
disjoint paths from $x$ to $y$ and may do so because each constituent is complete.

Suppose now that $x$ and $y$ belong to the same constituent $\mathrm{con}(v)$.  If
$\mathrm{val}(v)>k$ in $X$, then there are trivially at least $k$ internally disjoint paths joining
$x$ and $y$ in $\mathrm{con}(v)$ because it is a complete graph.  So we may assume that
$|\mathrm{con}(v)|=k$.

There are $k-1$ internally disjoint paths joining $x$ and $y$ in $\mathrm{con}(v)$ and we
need to find one more path that is internally disjoint from the $k-1$ paths.  Let $u'x$ and
$w'y$ be the edges of $Y$ incident with $x$ and $y$ such that $u'\in\mathrm{con}(u)$
and $w'\in\mathrm{con}(w)$, where $u,v\mbox{ and }w$ are distinct.  Because $X$ is
2-connected, there is a path in $X$ missing the vertex $v$.  Extending this path gives a
path $Q$ in $Y$ from a vertex of $\mathrm{con}(u)$ to a vertex of $\mathrm{con}(w)$ not
containing any vertex of $\mathrm{con}(v)$.  We may then use edges in the two
constituents, if necessary, to obtain a path in $Y$ from $u'$ to $w'$.  Then adding the edges
$u'x$ and $w'y$ gives the desired path in $Y$ completing the proof.     \end{proof}

\medskip

The proof of the following corollary is easy and shall not be given.

\begin{cor} If $X$ is a $k$-connected $k$-regular graph, then a generalized truncation of
$X$ is $k$-connected if and only if every constituent graph is complete.
\end{cor}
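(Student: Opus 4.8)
The plan is to prove both implications, mirroring the proof of Corollary~\ref{kreg} but with vertex connectivity replacing edge connectivity. First I would dispose of the degenerate case: if $k=1$ then $X=K_2$, whose only generalized truncation is $K_2$ itself, a complete and $1$-connected graph, so both sides of the equivalence hold; I then assume $k\geq 2$ throughout.

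For the forward direction, suppose every constituent graph is complete. Then $Y$ is a complete generalized truncation of the $k$-connected graph $X$, and Theorem~\ref{vconn} immediately gives that $Y$ is $k$-connected, so this direction requires nothing further.

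For the converse I would argue by contrapositive, assuming that some constituent $\mathrm{con}(v)$ is not complete and showing that $Y$ is not $k$-connected. The key observation is that, because $X$ is $k$-regular and loopless, the vertex $v$ is incident with exactly $k$ edges, each contributing one labelled endpoint to the cluster at $v$; hence $|\mathrm{con}(v)|=k$. Each vertex $x\in\mathrm{con}(v)$ is incident in $Y$ with exactly one edge of $F(M)$ together with its edges inside $\mathrm{con}(v)$, so $\mathrm{val}_Y(x)=1+d(x)$, where $d(x)$ denotes the degree of $x$ within $\mathrm{con}(v)$. Since $\mathrm{con}(v)$ is not complete, it contains two non-adjacent vertices $x$ and $y$; then $d(x)\leq k-2$ and therefore $\mathrm{val}_Y(x)\leq k-1$.

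Finally I would invoke the elementary bound $\kappa(Y)\leq\delta(Y)$. Since $|V(Y)|=k\,|V(X)|\geq k(k+1)>k$ for $k\geq 2$, deleting the at most $k-1$ neighbours of $x$ leaves $x$ isolated in a graph on at least two vertices, so $\kappa(Y)\leq k-1<k$ and $Y$ fails to be $k$-connected. I do not anticipate a serious obstacle; the only point requiring care is recording that $k$-regularity pins $|\mathrm{con}(v)|=k$ exactly, so that a single missing edge inside a constituent drops a vertex degree below $k$, and this is precisely where the regularity hypothesis earns its place.
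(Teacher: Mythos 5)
Your proof is correct and is essentially the argument the paper intends: the paper omits this proof as easy, and the natural argument is the vertex analogue of the printed proof of Corollary~\ref{kreg} --- the forward direction citing Theorem~\ref{vconn}, and the converse observing that $k$-regularity forces $|\mathrm{con}(v)|=k$, so a missing edge in a constituent yields a vertex of valency at most $k-1$ in $Y$ and hence $\kappa(Y)\leq k-1$. Your extra care with the $k=1$ case and with checking $|V(Y)|>k$ before invoking $\kappa(Y)\leq\delta(Y)$ only tightens steps the paper glosses over.
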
 

{\bf Research Problem 2}.  Determine conditions on the original multigraph $X$ and the
constituent graphs of a generalized truncation $\mathrm{TR}(X)$ that determine the
connectivity and/or the edge-connectivity of $\mathrm{TR}(X)$.

\section{Eulerian Truncations}

Recall that an {\it Euler tour} in a multigraph is a closed trail that covers each edge precisely
once.  A multigraph $X$ is {\it eulerian} if it possesses an Euler tour.  Also recall the following
well-known theorem of Euler.

\begin{thm} A connected multigraph $X$ is eulerian if and only if every vertex has even
valency.
\end{thm}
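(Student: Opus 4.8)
The plan is to prove the two implications separately, with the forward direction being a routine parity count and the backward direction carrying the real content.

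For necessity, suppose $X$ has an Euler tour $T$. I would fix any vertex $v$ and track how the closed trail passes through it: each passage enters $v$ along one edge and exits along another, so the edges incident with $v$ are paired off two at a time. Since $T$ traverses every edge exactly once, this pairing exhausts all $\mathrm{val}(v)$ incident edge-ends, forcing $\mathrm{val}(v)$ to be even. A loop at $v$ is crossed in a single step but has both of its ends at $v$ and contributes $2$ to the valency, so it is accounted for consistently and the parity conclusion is unaffected.

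For sufficiency, suppose $X$ is connected with every valency even. I would run a maximal-trail argument (equivalently, an induction on $|E(X)|$). Take a trail $T$ of maximum length and first show it is closed. If $T$ ran from $u$ to a distinct vertex $w$, then counting edge-ends used at $w$ gives an odd total: every interior visit consumes two edge-ends while the final arrival consumes one. As $\mathrm{val}(w)$ is even, some edge at $w$ is unused, and appending it lengthens $T$, contradicting maximality. Hence $T$ is a closed trail based at some vertex $u$.

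If $T$ uses every edge it is the required Euler tour. Otherwise, deleting the edges of $T$ leaves a submultigraph $H$ in which all valencies remain even, since a closed trail removes an even number of edge-ends at each vertex. The main obstacle is now to splice in the leftover edges, and this is exactly where connectivity is used: because $X$ is connected, some vertex $v$ of $T$ must be incident with an edge of $H$, and the same maximal-trail reasoning inside $H$ yields a nonempty closed trail $T'$ based at $v$. Concatenating $T$ and $T'$ at their common vertex $v$ produces a closed trail strictly longer than $T$, contradicting its maximality. Thus no edges can be left over, and $T$ is an Euler tour. Throughout, treating each edge (including loops and parallel edges) as a distinct object makes the argument valid in the full multigraph setting.
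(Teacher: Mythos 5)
Your proof is correct, but there is no proof in the paper to compare it against: the paper merely recalls this statement as Euler's classical theorem, stating it without proof as background for the subsequent result on eulerian generalized truncations. What you have written is the standard Hierholzer-style argument: necessity by pairing the entering and exiting edge-ends at each vertex of the closed trail, and sufficiency by taking a maximum-length trail, using the parity of the valencies to force it to be closed, and then splicing in a nonempty closed trail of leftover edges at a common vertex, with connectivity guaranteeing that such a common vertex exists. The only step you pass over quickly is the claim that some vertex of $T$ is incident with an edge of the leftover submultigraph $H$; this deserves one line of justification: take any edge of $H$, take a path in $X$ from one of its ends to a vertex of $T$, and let $v$ be the first vertex of that path lying on $T$; the edge of the path arriving at $v$ has its other end off $T$, so it cannot belong to $T$ (every edge of $T$ has both ends on $T$) and hence is an edge of $H$ incident with $v$. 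With that observation supplied, your argument is complete and handles loops and multiple edges correctly, which matters since the statement is about multigraphs.
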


As we shall soon see, determining when a generalized truncation $Y$ is eulerian is straightforward.

\begin{thm}\label{euler} Let $X$ be a connected multigraph.  Every component of a generalized
truncation $Y$ of $X$ is eulerian if and only if $X$ is eulerian and every constituent has only
vertices of odd valency.
\end{thm}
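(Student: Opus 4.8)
The plan is to reduce the statement to Euler's theorem by means of a single valency computation inside $Y$. The key observation is a local valency decomposition: every vertex $w$ of $Y$ lies in a unique constituent $\mathrm{con}(v)$, and the only edges of $Y$ meeting $w$ are the one edge of $F(M)$ incident with $w$ together with the edges of $\mathrm{con}(v)$ at $w$. Writing $d(w)$ for the valency of $w$ within its constituent, this gives $\mathrm{val}_Y(w)=1+d(w)$ for every $w$. This identity carries the entire argument.

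First I would note that a component of $Y$ is eulerian exactly when all of its vertices have even valency, since each component is connected and Euler's theorem applies. Hence the left-hand side, ``every component of $Y$ is eulerian,'' is equivalent to ``$\mathrm{val}_Y(w)$ is even for every vertex $w$,'' which by the displayed identity is equivalent to ``$d(w)$ is odd for every vertex of every constituent''; that is, every constituent has only vertices of odd valency.

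For the backward direction I would assume $X$ is eulerian and that every constituent has only odd-valency vertices. Then $\mathrm{val}_Y(w)=1+d(w)$ is the sum of $1$ and an odd number, hence even, for every $w$, so every component of $Y$ is eulerian. For the forward direction I would assume every component of $Y$ is eulerian; the equivalence above immediately yields that every constituent has only odd-valency vertices, and it remains to recover the assertion that $X$ is eulerian. For this I would invoke the handshake lemma: a graph in which every vertex has odd valency necessarily has an even number of vertices. Since $|\mathrm{con}(v)|$ equals the number of edges of $X$ incident with $v$, namely $\mathrm{val}(v)$, the all-odd condition on each $\mathrm{con}(v)$ forces $\mathrm{val}(v)$ to be even for every $v\in V(X)$; as $X$ is connected, $X$ is eulerian.

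I do not expect any step here to be a genuine obstacle; the proof is essentially the identity $\mathrm{val}_Y(w)=1+d(w)$ combined with Euler's theorem. The one subtlety worth flagging is that the hypothesis ``$X$ is eulerian'' is in fact a logical consequence of the constituent condition, through the handshake lemma used above, so it is redundant as a requirement and is retained only to stress the constructive, ``local'' reading of the theorem: one begins with an eulerian $X$ and is then free to build each constituent independently, subject only to every valency within it being odd.
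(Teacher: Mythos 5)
Your proof is correct and follows essentially the same route as the paper's: the valency identity $\mathrm{val}_Y(w)=1+d(w)$, Euler's theorem applied componentwise, and the handshake-lemma argument that all-odd constituents force even order, hence even valencies in $X$. Your added remark that the hypothesis ``$X$ is eulerian'' is redundant given connectivity and the constituent condition is a fair observation that the paper's own proof implicitly confirms.
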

\begin{proof} Let $X$ be an eulerian multigraph so that every vertex of $X$ has even valency.
This implies that every constituent $\mathrm{con}(u)$ has even order.  The valency of a
vertex $x\in\mathrm{con}(u)$ in $Y$ is one plus its valency in $\mathrm{con}(u)$.  Thus, if
every vertex in every constituent has odd valency in the constituent, then every vertex has
even valency in $Y$.  Hence, every component of $Y$ is eulerian.

On the other hand, if every component of $Y$ is eulerian, then every vertex has even valency
in $Y$.  This implies that evey vertex has odd valency in its constituent.  This, in turn, implies
that every component has even order.  Then every constituent has even order which implies
that $X$ is eulerian because it is given that $X$ is connected.   \end{proof} 

\medskip

The preceding is a local theorem because we need only consider each constituent in order to
achieve the concusion.  However, considering the constituents individually does not guarentee
that the generalized truncation $Y$ itself is eulerian because it may not be connected.  So we
need to consider the structure of $X$ in order to obtain a generalized truncation $Y$ that is
eulerian. 

\section{Hamiltonicity}

There are three hamiltonicity problems we consider in this section.  The first deals with the
hamiltonian problem, that is, does a graph contain a Hamilton cycle.  The hamiltonian
problem is one of the earliest problems arising in graph theory, and is one that has been
widely studied in many contexts.  

It is apparent that  that there will be no easy answers regarding the existence of Hamilton
cycles in generalized truncations.  We may safely say this because there are many ways
for a Hamilton cycle to contain all the vertices of a constituent graph.  For example, it might
enter a constituent once and pass through all the vertices of the constituent before exiting.
On the other hand, it might enter and exit multiple times.  What is the case is that a Hamilton
cycle in a generalized truncation partitions a constituent into a collection of vertex-disjoint
paths covering the vertices of the constituent. 

The following theorem appears in \cite{A1}.

\begin{thm}\label{ham} If $\mathrm{TR}(X)$ is a complete generalized truncation of a connected
multigraph $X$, then $\mathrm{TR}(X)$ is hamiltonian if and only if $X$ contains a spanning
eulerian subgraph.
\end{thm}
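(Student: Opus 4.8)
The plan is to exploit the expansion operation as a two-way bridge between closed trails in $X$ and cycles in $Y=\mathrm{TR}(X)$. Recall that each vertex of a constituent $\mathrm{con}(v)$ is the $v$-labelled end of a unique edge of $F(M)$, so the vertices of $\mathrm{cl}(v)$ are in bijection with the edges of $X$ incident with $v$; since $X$ has no loops, every edge of $F(M)$ has exactly one end in $\mathrm{cl}(v)$. I would prove the two implications separately, using the projection of a Hamilton cycle for the forward direction and the expansion of an Euler tour for the reverse direction; only the reverse direction will actually invoke completeness.

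For the forward direction, suppose $Y$ is hamiltonian with Hamilton cycle $C$. Let $H$ be the set of edges of $X$ whose corresponding $F(M)$-edge lies on $C$. As already observed, $C$ meets each constituent $\mathrm{con}(v)$ in a family of vertex-disjoint paths covering $\mathrm{cl}(v)$; no path is a single vertex, since every vertex of $Y$ has $C$-degree $2$ but only one incident $F(M)$-edge. Hence the path endpoints in $\mathrm{con}(v)$ are exactly the vertices whose $F(M)$-edge lies on $C$, their number is even and at least $2$, and this number is precisely $\deg_H(v)$. Thus $H$ is a spanning subgraph with all degrees even and positive. Finally, contracting each constituent back to its vertex of $X$ sends $C$ to a closed walk that traverses each edge of $H$ exactly once and visits every vertex of $X$; this closed walk is an Euler tour, so $H$ is connected and therefore a spanning eulerian subgraph.

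For the reverse direction, let $H$ be a spanning eulerian subgraph of $X$ and fix an Euler tour $T$ of $H$. Each passage of $T$ through a vertex $v$ pairs two of the $v$-labelled ends of $H$-edges, and over all passages these pairs exhaust the $\deg_H(v)$ such ends. Expanding $T$ by joining each such pair with the corresponding edge of $\mathrm{con}(v)$ --- available because the constituent is complete --- yields a single cycle that passes exactly through the vertices of $Y$ coming from edges of $H$. To obtain a Hamilton cycle I must also absorb the vertices of each $\mathrm{cl}(v)$ coming from edges of $X\setminus H$. Since $H$ is connected and spanning, $\deg_H(v)\ge 2$, so $T$ passes through $v$ at least once; in one such passage, instead of a single edge of $\mathrm{con}(v)$ I route a path that starts at the incoming end, runs through all of the as-yet-unused vertices of $\mathrm{cl}(v)$, and finishes at the outgoing end. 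Completeness of $\mathrm{con}(v)$ guarantees this path exists, and performing this at every vertex threads all remaining vertices onto the cycle while keeping it a single closed simple curve. The result is a Hamilton cycle of $Y$.

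I expect the absorption step in the reverse direction to be the main point requiring care: one must check that a passage of $T$ through $v$ is always available to carry the surplus vertices (guaranteed by $\deg_H(v)\ge 2$) and that rerouting through the complete constituent neither repeats a vertex nor splits the cycle. The forward direction is comparatively routine, and notably uses none of the completeness hypothesis --- it is only the construction of a Hamilton cycle from an Euler tour that relies on the constituents being complete.
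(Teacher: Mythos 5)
Your proof is correct: the forward direction's projection argument (the $F(M)$-edges of a Hamilton cycle give a spanning subgraph with even, positive valencies, and contracting constituents turns the cycle into an Euler tour of it) and the reverse direction's expansion of an Euler tour with absorption of the unused cluster vertices are both sound, and you rightly note that completeness is needed only for the reverse direction. The paper itself states Theorem \ref{ham} without proof, citing \cite{A1}, but your argument is precisely the standard one and uses exactly the machinery the paper sets up --- the expansion operation and the observation that a Hamilton cycle partitions each constituent into vertex-disjoint covering paths --- so there is nothing to flag.
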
 

The preceding theorem is special because each of the constituent graphs is complete suggesting
the following question. 

\medskip

{\bf Research Problem 3}.  Determine conditions on the source multigraph and constituents
that imply a generalized truncation is hamiltonian.

\medskip

The second hamiltonicity problem we consider is Hamilton connectivity.  Recall that a multigraph
$X$ is {\it Hamilton-connected} if for every pair of vertices $u$ and $v$ in $X$ there is a
Hamilton path in $X$ whose terminal vertices are $u$ and $v$.  Similarly, a bipartite multigraph
$X$ with parts of the same cardinality is {\it Hamilton-laceable} if for any two vertices in
opposite parts there is Hamilton path in $X$ from one to the other. 

The only Hamilton-connected graph with a vertex of valency 1 is $K_2$.  The generalized
truncation of $K_2$ is $K_2$ itself so that every generalized truncation of a Hamilton-connected
graph with a vertex of valency 1 is Hamilton-connected.

The Hamilton-connected multigraphs with a vertex of valency 2 are $K_3$ and $2K_2$ (an
edge of multiplicity 2).  The complete generalized truncations of these two multigraphs are
$C_6$ (the cycle of length 6) and $C_4$.  Neither of them are Hamilton-connected so that
there are no Hamilton-connected generalized truncations of either $K_3$ or $2K_2$.

From the preceding comments we may assume that the multigraphs under consideration
have minimum valency at least 3.  Note that once a multigraph has a vertex of valency 3 or
more, then its complete generalized truncation is not bipartite.  Hence, the complete generalized
truncation of a bipartite graph may not be bipartite.  However, bipartiteness may not be
a barrier to generalized truncations being Hamilton-connected.  For example, the complete
bipartite $K_{3,3}$ is easily seen to be Hamilton-laceable.  It turns out that its complete
generalized truncation is Hamilton-connected.

\begin{thm}\label{hc} A generalized truncation of the complete graph $K_n$, $n>3$, is
Hamilton-connected if every constituent graph is Hamilton-connected.
\end{thm}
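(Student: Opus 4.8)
The plan is to reduce the Hamilton-connectivity of $Y=\mathrm{TR}(K_n)$ to the existence of suitable paths in $K_n$ (and in $K_{n-1}$) that I then \emph{expand}, using Hamilton-connectivity of the constituents to traverse each constituent completely in a single pass. Write $\mathrm{con}(v)$ for the constituent at $v\in V(K_n)$; its $n-1$ vertices are $v(u)$, $u\neq v$, and the edges of $F(M)$ are the $v(u)u(v)$. The mechanism I would isolate first is the following. If $v_1v_2\cdots v_k$ ($k\geq 2$) is a path in $K_n$ and I prescribe a start $s\in\mathrm{con}(v_1)$ and a finish $t\in\mathrm{con}(v_k)$, then choosing in each interior $\mathrm{con}(v_i)$ a Hamilton path from $v_i(v_{i-1})$ to $v_i(v_{i+1})$, in $\mathrm{con}(v_1)$ a Hamilton path from $s$ to $v_1(v_2)$, and in $\mathrm{con}(v_k)$ a Hamilton path from $v_k(v_{k-1})$ to $t$, yields by expansion a path in $Y$ from $s$ to $t$ covering \emph{exactly} the vertices of $\bigcup_i\mathrm{con}(v_i)$. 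For interior constituents the two prescribed ends are distinct because $v_{i-1}\neq v_{i+1}$, so Hamilton-connectivity supplies each subpath; the only extra conditions are $s\neq v_1(v_2)$ and $t\neq v_k(v_{k-1})$.

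Now let $x\in\mathrm{con}(a)$ and $y\in\mathrm{con}(b)$ be the two (distinct) vertices between which a Hamilton path of $Y$ is sought. In the case $a\neq b$, write $x=a(c)$ and $y=b(d)$, take $P$ to be a Hamilton path of $K_n$ from $a$ to $b$, say $a=v_1,\ldots,v_n=b$, and apply the mechanism with $s=x$, $t=y$. The conditions become $v_2\neq c$ and $v_{n-1}\neq d$, which only matter when $c,d\in V(K_n)\setminus\{a,b\}$. Since the $n-2$ middle vertices may be ordered freely, such an ordering exists whenever $n-2\geq 3$, and also for $n=4$ unless $c=d$ is the single troublesome label. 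This exceptional configuration ($n=4$, $x=a(c)$, $y=b(c)$ with $c\notin\{a,b\}$) is the one place the clean argument fails; there every constituent has three vertices and is therefore $K_3$ (the only Hamilton-connected graph on three vertices), so I would simply exhibit an explicit Hamilton path, e.g., with $V(K_4)=\{a,b,c,e\}$,
\[
a(c),\,c(a),\,c(b),\,c(e),\,e(c),\,e(b),\,e(a),\,a(e),\,a(b),\,b(a),\,b(e),\,b(c),
\]
which runs from $x=a(c)$ to $y=b(c)$ through all twelve vertices.

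In the case $a=b$ both endpoints lie in $\mathrm{con}(a)$, so no single pass suffices and I would split instead. Take a Hamilton path $x=w_1\cdots w_{n-1}=y$ of $\mathrm{con}(a)$ and cut it at an edge $w_iw_{i+1}$, writing $w_i=a(t_i)$, $w_{i+1}=a(t_{i+1})$ with $t_i\neq t_{i+1}$. In $K_{n-1}=K_n-a$ choose a Hamilton path from $t_i$ to $t_{i+1}$ and expand it over the constituents $\mathrm{con}(u)$, $u\neq a$, to get a path from $t_i(a)$ to $t_{i+1}(a)$ covering every such constituent (each extra vertex $u(a)$ is absorbed as an interior vertex of its constituent's Hamilton path). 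Splicing in the matching edges $w_it_i(a)$ and $w_{i+1}t_{i+1}(a)$ joins the two pieces of the split path (from $x$ to $w_i$ and from $w_{i+1}$ to $y$) to this expansion, producing a Hamilton path of $Y$ from $x$ to $y$. No conflict arises here, because the prescribed ends of the two extreme constituents carry the label $a$, which cannot equal any exit label (those lie in $V(K_{n-1})$).

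The step I expect to be the main obstacle is the endpoint bookkeeping in the case $a\neq b$: guaranteeing the second and penultimate vertices of the chosen Hamilton path of $K_n$ avoid the labels forced by $x$ and $y$, together with the lone $n=4$ exception that must be handled by hand. The hypothesis $n>3$ enters exactly here — it is what gives the middle-ordering freedom and makes $K_{n-1}$ Hamilton-connected — and it is genuinely needed, since for $n=3$ the complete truncation is $C_6$, which is not Hamilton-connected.
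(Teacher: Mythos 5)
Your proposal is correct, and its overall strategy is the same as the paper's: split into the two cases (endpoints in different constituents, endpoints in the same constituent), lift a suitable Hamilton path of the base graph by expansion, using Hamilton-connectivity of each constituent to cross it in one pass with prescribed distinct entry and exit vertices. But your treatment is actually more careful than the paper's in the first case, and this is worth spelling out. The paper chooses a Hamilton path $P$ in $K_n$ from $u$ to $v$ with the second vertex different from $w$ and the penultimate vertex different from $z$ (in your notation, $v_2\neq c$ and $v_{n-1}\neq d$), asserting this is possible ``because $n\geq 4$.'' That assertion fails precisely in the configuration you isolated: for $n=4$ with $c=d\notin\{a,b\}$, the only two Hamilton paths from $a$ to $b$ are $a,c,e,b$ and $a,e,c,b$, and each violates one of the two conditions. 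So the paper's proof has a genuine (if small and repairable) gap there, which your explicit twelve-vertex Hamilton path in the truncation of $K_4$ closes; I checked that path and it is valid, every constituent being forced to be $K_3$. In the same-constituent case you diverge slightly in mechanism: the paper drops the last vertex of the constituent's Hamilton path $Q$ and routes a Hamilton path of $K_n$ itself from $u$ to $w$ starting with the edge toward the label of the penultimate vertex of $Q$, re-entering the constituent at the very end; you instead cut $Q$ at an arbitrary interior edge and route through a Hamilton path of $K_{n-1}=K_n-a$, splicing both cut ends to the expansion via matching edges. Both are sound, and your variant has the minor advantage that no endpoint condition on the $K_{n-1}$ path needs checking, since the two prescribed constituent-entry vertices carry the label $a$, which no exit label can equal.
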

\begin{proof} Let $Y$ be a generalized truncation of $K_n$, $n>3$, in which every constituent
graph is Hamilton-connected.  Let $x$ and $y$ be vertices of $Y$ in different constituent graphs
$\mathrm{con}(u)$ and $\mathrm{con}(v)$, respectively.  Let $[u,w]$ be the edge of $K_n$
such that $x$ is the vertex of $\mathrm{con}(u)$ corresponding to $u$ and, similarly, let
$[z,v]$ be the edge of $K_n$ such that $y$ is the vertex of $\mathrm{con}(v)$
corresponding to $v$.  We can find a Hamilton path $P$ in $K_n$ from $u$ to $v$ such that
$w$ is not the vertex following $u$ on $P$, and $z$ is not the vertex preceding $v$ on $P$
because $n\geq 4$.

It is now easy to find a Hamilton path from $x$ to $y$ by extending $P$.  The vertex of
$\mathrm{con}(u)$ corresponding to the vertex $u$ in $P$ is $x'\neq x$.  Because
$\mathrm{con}(u)$ is Hamilton-connected, there is a path from $x$ to $x'$ spanning all
the vertices of $\mathrm{con}(u)$.  It is easy to use all of the vertices of the constituent
graphs as we work along $P$ because they are Hamilton-connected and the entering and
departing vertices are distinct.  The completion of the Hamilton path from $x$ to $y$ in
$Y$ in $\mathrm{con}(v)$ is done in the same way as the path was started in $\mathrm{con}(u)$.  

Now let $x$ and $y$ both belong to $\mathrm{con}(u)$.  A little more care
needs to be taken in this case.  Because $n\geq 4$, $|\mathrm{con}(u)|\geq 3$.
Let $[u,v],[u,w]\mbox{ and }[u,z]$ be edges of $K_n$ corresponding to the vertices
$x,y\mbox{ and }z$ in $\mathrm{con}(u)$, where $z$ will be specified shortly.

Consider a path $Q$ in $\mathrm{con}(u)$ from $x$ to $y$ spanning all the vertices
of this constituent graph.  Let $z$ be the vertex preceding $y$ on $Q$.  Choose a
Hamilton path $P$ in $X$ from $u$ to $w$ starting with the edge $[u,z]$.  We extend
$P$ in the following way.  Start by removing the last vertex of $Q$ so that we have a
path from $x$ to $z$ using all the vertices of $\mathrm{con}(u)$ other than $y$.
Now extend $P$ through the other constituent graphs as before until reaching
$\mathrm{con}(w)$.  Extend the path in the latter constituent graph so that it ends
at the vertex corresponding to the edge $[u,w]$ in $X$.  Then add the edge to
$y$ and we have the desired Hamilton path in $Y$.     \end{proof} 

\medskip

The conditions for Theorem \ref{hc} are special and suggest two further problems.

\medskip

{\bf Research Problem 4}.  If $X$ is a Hamilton-connected or Hamilton-laceable multigraph
with minimum valency at least 3, is the complete generalized truncation of $X$ Hamilton-connected?

\medskip

{\bf Research Problem 5}.  What conditions on the source multigraph $X$ and the
constituents of a generlized truncation $Y$ of $X$ guarantee that $Y$ is Hamilton-connected?

\medskip

The final problem we consider deals with Hamilton decompositions.  A regular graph is
{\it Hamilton-decomposable} if its edge set can partitioned into Hamilton cycles when the
valency is even, and into Hamilton cycles and a single perfect matching when the valency is
odd.  For the next result we require two facts that we encapsulate as a lemma.  These facts
are based on the Walecki decompositions given in \cite{A2}.  The first fact is presented
directly in \cite{A2}.  The second fact is obtained by removing the diameter edge
from each Hamilton cycle in the decomposition of a complete graph of odd order into
Hamilton cycles which also is given in \cite{A2}.

\begin{lemma}\label{help} Let $X$ be a complete graph of order $n$.

{\rm (i)} If $n$ is even, then $X$ has a decomposition into $n/2$ Hamilton paths.

{\rm (ii)} If $n$ is odd, then $X$ has a decomposition into $(n-1)/2$ Hamilton paths and a matching
with $(n-1)/2$ edges.
\end{lemma}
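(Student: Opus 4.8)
The plan is to reduce both statements to the classical Walecki decompositions of complete graphs, which are exactly the results quoted from \cite{A2} in the paragraph preceding the lemma. Part (i) is literally the Walecki decomposition of a complete graph of even order into Hamilton paths, so for that part I would simply invoke \cite{A2}; the only thing worth recording is the edge-count sanity check, namely that $(n/2)(n-1)=\binom{n}{2}$, so $n/2$ Hamilton paths do account for every edge exactly once. All of the real content therefore lies in part (ii), which I would obtain from the companion Walecki decomposition of a complete graph of \emph{odd} order into Hamilton cycles.

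For part (ii), write $n=2m+1$ and recall the Walecki cycle decomposition in its usual geometric picture: place one vertex $c$ at the centre and the remaining $2m$ vertices equally spaced on a circle, labelled by $\mathbb{Z}_{2m}$. The base Hamilton cycle is the zigzag through $c$, and rotating the circle labels by $0,1,\ldots,m-1$ produces the $m=(n-1)/2$ pairwise edge-disjoint Hamilton cycles that partition $E(K_n)$. In each cycle I would single out its \emph{diameter edge}, the unique edge joining two antipodal circle vertices, that is, a pair of the form $\{j,j+m\}$.

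Now delete the diameter edge from each of the $m$ Hamilton cycles. Since deleting a single edge from a Hamilton cycle leaves a Hamilton path, this immediately yields $(n-1)/2$ Hamilton paths, and they remain edge-disjoint because the cycles were. The point that actually requires proof -- and the step I expect to be the main obstacle -- is that the deleted diameter edges together form a matching of size $(n-1)/2$. For this I would track the diameter edge under the rotation: there are exactly $m$ antipodal chords $\{0,m\},\{1,m+1\},\ldots,\{m-1,2m-1\}$, they are pairwise disjoint and partition the $2m$ circle vertices, and as the base cycle is rotated through the $m$ rotations its diameter edge passes through each antipodal chord exactly once. Hence the $m$ deleted edges are pairwise vertex-disjoint, they number $(n-1)/2$, and they cover every vertex except the centre $c$ -- precisely a matching with $(n-1)/2$ edges.

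Finally I would confirm that nothing has been lost or double counted: the $(n-1)/2$ Hamilton paths contribute $\frac{n-1}{2}(n-1)$ edges and the matching contributes $\frac{n-1}{2}$ edges, for a total of $\frac{n-1}{2}\,n=\binom{n}{2}$, so the construction really is a decomposition of $K_n$. The only genuinely non-routine part of the argument is the disjointness claim for the diameter edges, which rests entirely on the rotational symmetry of the Walecki cycles; once that is in hand, both halves of the lemma follow.
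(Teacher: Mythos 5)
Your proposal is correct and takes exactly the route the paper itself takes: the paper establishes part (i) by citing the Walecki decomposition of \cite{A2} directly, and part (ii) by removing the diameter edge from each Hamilton cycle in the Walecki decomposition of the odd-order complete graph, which is precisely your construction. Your write-up simply supplies the details the paper leaves implicit, namely the rotational argument that the deleted diameter edges form a matching of size $(n-1)/2$ and the edge-count verification.
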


\begin{thm}\label{hdecomp} If $X$ is a Hamilton-decomposable graph, then the complete
generalized truncation of $X$ also is Hamilton-decomposable.
\end{thm}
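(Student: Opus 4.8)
The plan is to combine a Hamilton decomposition of $X$ with the Hamilton path decompositions of each complete constituent supplied by Lemma \ref{help}, stitching them together by expansion. Since $X$ is $k$-regular, every constituent of the complete generalized truncation $Y$ is a copy of $K_k$, and every vertex of $Y$ has valency $(k-1)+1=k$; thus $Y$ is $k$-regular and a Hamilton decomposition of the appropriate parity is exactly what we seek.

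First I would record what the Hamilton decomposition of $X$ imposes on each cluster. If $k$ is even, write $X=H_1\cup\cdots\cup H_{k/2}$; if $k$ is odd, write $X=H_1\cup\cdots\cup H_{(k-1)/2}\cup M_X$ with $M_X$ a perfect matching. At each vertex $v$ the two edges of $H_i$ incident with $v$ correspond to a pair of vertices of $\mathrm{con}(v)$, and (when $k$ is odd) the edge of $M_X$ at $v$ corresponds to one further vertex $w_v$. As $H_i$ ranges over the cycles, these pairs partition the $k$ vertices of $\mathrm{con}(v)$, with the singleton $w_v$ left over in the odd case.

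The central step is to decompose each constituent $K_k=\mathrm{con}(v)$ into Hamilton paths whose endpoint pairs are \emph{exactly} the prescribed pairs above, together with a left-over near-perfect matching avoiding $w_v$ in the odd case. Lemma \ref{help} supplies a decomposition of $K_k$ into the correct number of Hamilton paths (plus a matching when $k$ is odd), but not with prescribed endpoints, and this is the main obstacle. I would overcome it by a counting-plus-symmetry argument. A degree count at a vertex $u$ of $K_k$ shows that in any such decomposition $u$ is an endpoint of exactly one path when $u$ is covered by the matching, and is interior to every path when $u$ is the vertex missed by the matching; hence the path endpoints always form a perfect matching of the covered vertices. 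Because the symmetric group acts transitively on the perfect matchings of a finite set, I can relabel the vertices of $\mathrm{con}(v)$ to carry Lemma \ref{help}'s endpoint matching onto the prescribed one, and in the odd case simultaneously carry the missed vertex onto $w_v$.

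With these constituent decompositions in hand, I would assemble the Hamilton decomposition of $Y$ by expansion. For each $i$, expanding $H_i$ while, at every vertex $v$, traversing the chosen Hamilton path $P_i^v$ of $\mathrm{con}(v)$ (whose endpoints are precisely the pair where $H_i$ meets $v$) produces a closed walk passing through every vertex of every constituent exactly once, hence a Hamilton cycle of $Y$. Distinct values of $i$ use disjoint matching edges (from the Hamilton decomposition of $X$) and disjoint constituent edges (from Lemma \ref{help}), so these Hamilton cycles of $Y$ are mutually edge-disjoint. When $k$ is odd, the remaining edges---the edges of $F(M)$ coming from $M_X$ together with the left-over near-perfect matchings in the constituents---form a perfect matching of $Y$, since the $M_X$ edges cover exactly the vertices $w_v$ while each constituent matching covers precisely $\mathrm{cl}(v)\setminus\{w_v\}$. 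A final edge count confirms that the Hamilton cycles, together with the perfect matching when $k$ is odd, exhaust $E(Y)$, yielding the desired Hamilton decomposition.
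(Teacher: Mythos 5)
Your proposal is correct and takes essentially the same route as the paper: decompose $X$ into Hamilton cycles (plus a perfect matching when the valency is odd), use Lemma \ref{help} to decompose each complete constituent into Hamilton paths whose endpoint pairs are exactly the pairs prescribed by the cycles (plus a near-perfect matching missing the prescribed vertex in the odd case), and expand each $H_i$ through these paths. Your degree-count and relabeling argument merely makes explicit a step the paper asserts without justification, namely that the endpoints of the constituent paths can be prescribed, so it is a clarification rather than a different approach.
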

\begin{proof} Let $X$ have a decomposition into Hamilton cycles $H_1,H_2,\ldots,H_n$.
Let $Y$ denote the complete generalized truncation of $X$.  Note that each constituent of
$Y$ has order $2n$.  The edges of $H_i$ in $Y$ intersect each constituent in two vertices.
Hence, the $2n$ vertices of a given constituent are partitioned into $n$ 2-sets. By Lemma
\ref{help}(i), we may decompose a constituent into $n$ spanning paths such that the end
vertices of each path belong to the same 2-set.  It now is obvious that we may expand each
Hamilton cycle $H_i$ of $X$ into a Hamilton cycle in $Y$ using the spanning paths of the
constituents.  This decomposes $Y$ into $n$ Hamilton cycles.

When $X$ has a decomposition into $n$ Hamilton cycles and a single perfect matching, we
slightly modify the preceding construction.  Each constituent now has odd order so we use
Lemma \ref{help}(ii) to decompose the constituent into $n$ spanning paths and an $n$-matching.
The $n$-matching misses precisely one vertex of the constituent and we make sure that
missing vertex is the vertex which is incident with the matching edge of $X$ that is incident
with a vertex of the constituent.  It is now easy to see how to complete the Hamilton
decomposition of $Y$.     \end{proof}

\medskip

There are other ways to obtain a Hamilton decomposition of a generalized truncation.
For example, if we start with a spanning eulerian subgraph of valency 4 in $X$, we may
use that to obtain a Hamilton cycle in $Y$.  This suggests the following problem.

\medskip

{\bf Research Problem 6}.  Find conditions on the source graph and the constituents that
produce a Hamilton-decomposable generalized truncation.

\section{Planarity}

Planarity is another basic topic that has been studied extensively in graph theory.  It is natural
to consider which generalized truncations are planar.  After the excision stage before any edges
have been added to the constituents, the generalized truncation certainly is planar which
suggests two questions.  First, what can we say about planarity in terms of the number of edges
we introduce in the constituents.  Second, what can we say about planarity if we insist that
the generalized truncation is cohesive.  We now investigate the second question.

\begin{lemma}\label{pl1} If $X$ is a non-planar graph, then every cohesive generalized
truncation of $X$ is non-planar.
\end{lemma}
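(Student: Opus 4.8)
The plan is to realize $X$ as a minor of $Y$ and then appeal to the fact that planarity is preserved under edge contraction. The hypothesis of cohesiveness is exactly what makes this possible: since $Y$ is cohesive, every constituent $\mathrm{con}(v)$ is connected, so each one contains a spanning tree. First I would, for every $v\in V(X)$, contract all the edges of such a spanning tree, thereby collapsing each $\mathrm{con}(v)$ to a single vertex. Carrying this out simultaneously over all constituents leaves a multigraph whose vertices are in bijection with $V(X)$.

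Next I would identify the surviving edges. The edges of $Y$ fall into two classes: the internal edges of the constituents and the matching edges of $F(M)$. The contractions above consume (or turn into discarded loops) all the internal edges, so only the edges of $F(M)$ remain. Each such edge arose from a unique $uv\in E(X)$ and joined a vertex of $\mathrm{con}(u)$ to a vertex of $\mathrm{con}(v)$; after the collapse it becomes an edge between the single vertex representing $u$ and the single vertex representing $v$. Because $X$ is a graph, no two edges of $X$ share both endpoints, so distinct edges of $F(M)$ produce distinct edges after contraction, and the resulting graph is precisely $X$. Hence $X$ is a minor of $Y$.

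Finally I would invoke the standard fact that contracting an edge of a planar graph yields a planar graph, since a plane embedding of $Y$ induces a plane embedding of any graph obtained from $Y$ by contractions. Thus if $Y$ were planar, $X$ would be planar as well; taking the contrapositive, the non-planarity of $X$ forces every cohesive generalized truncation $Y$ to be non-planar. The entire argument rests on the single observation that connectedness of the constituents is exactly the condition needed to contract each $\mathrm{con}(v)$ to a point while recovering $X$, so there is no real technical obstacle. The only point deserving a moment's care is verifying that the contractions return $X$ exactly rather than some proper minor or a multigraph, which is guaranteed by the simplicity of $X$ and the resulting bijection between the edges of $F(M)$ and $E(X)$.
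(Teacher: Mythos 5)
Your proof is correct and follows essentially the same route as the paper: both arguments hinge on the observation that cohesiveness (connected constituents) lets you contract each $\mathrm{con}(v)$ to a single vertex, exhibiting $X$ as a minor of $Y$. The only cosmetic difference is in the final step, where you invoke minor-closedness of planarity directly, while the paper phrases the same fact via Wagner's characterization (a $K_5$- or $K_{3,3}$-minor in $X$) together with transitivity of the minor relation.
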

\begin{proof} Because $X$ is non-planar, it has either $K_{3,3}$ or $K_5$ as a minor.
If $Y$ is a cohesive generalized truncation of $X$, then $X$ is a minor of $Y$ by contracting
each constituent of $Y$ to a single vertex and removing the loops.  Thus, $Y$ has either a
$K_{3,3}$-minor or a $K_5$-minor as the minor relation is transitive.     \end{proof}

\medskip

Because of Lemma \ref{pl1}, we now consider cohesive generalized truncations of planar
graphs and describe a process that produces a planar cohesive generalized truncation.  Let
$X$ be a plane graph, that is, it is given embedded in the plane with no edges crossing.
Draw a small closed disc around each vertex of $X$ so that none of the discs overlap.  Remove the
intersection of each edge with the interior of the discs surrounding its end vertices, and let the
intersections of the edges with the boundaries of the discs be the end vertices of the fragments
of the original edges.

After performing the preceding operations, we have the perfect matching $F(M)$ embedded in
the plane.  Recall that a graph is {\it outerplanar} if it has an embedding in the plane so that
every vertex belongs to the boundary of the infinite face.  If we now insert an outerplanar graph
for each constituent, it is clear that the
resulting generalized truncation is planar.  However, we shall now see that there are planar
generalized truncations for which there are constituents that are not outerplanar.  To get a
handle on this we use the following result from \cite{C1}.

\begin{thm}\label{outer} A graph is outerplanar if and only it it contains no subgraph homeomorphic
to $K_{2,3}$ or $K_4$.
\end{thm}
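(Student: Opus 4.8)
The plan is to deduce this outerplanarity criterion from Kuratowski's planarity theorem by passing to an apex-augmented graph. Write $G^{+}$ for the graph obtained from $G$ by adjoining a single new vertex $v$ joined to every vertex of $G$. The engine of the whole argument is the auxiliary lemma that $G$ is outerplanar if and only if $G^{+}$ is planar. For the forward implication I would take an outerplanar embedding of $G$, place $v$ in the outer face, and join it to each vertex of $G$ by pairwise non-crossing arcs drawn inside that face, which is possible precisely because every vertex of $G$ lies on the boundary of the outer face. For the reverse implication I would start from a planar embedding of $G^{+}$ and delete $v$; since $v$ is adjacent to all of $G$, the faces formerly incident with $v$ coalesce into one face whose boundary meets every vertex of $G$, so the induced embedding of $G$ exhibits it as outerplanar. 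This reverse implication, a careful topological argument about how the rotation at $v$ forces all neighbours onto the merged face, is the step I expect to need the most care.

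With the lemma established, Kuratowski's theorem says $G^{+}$ is planar if and only if it contains no subgraph homeomorphic to $K_{5}$ or $K_{3,3}$, so the theorem reduces to the single biconditional that $G^{+}$ contains a subdivision of $K_{5}$ or $K_{3,3}$ if and only if $G$ contains a subdivision of $K_{4}$ or $K_{2,3}$. The easy direction of this biconditional (which delivers the necessity half of the theorem) rests on the identities $K_{4}\vee K_{1}=K_{5}$ and $K_{2,3}\vee K_{1}\supseteq K_{3,3}$, where in the latter the apex together with the part of size two plays the role of one side of a $K_{3,3}$. Given a subdivision of $K_{4}$ or $K_{2,3}$ inside $G$, I would adjoin the edges from $v$ to its branch vertices, and since $v$ is new these edges are internally disjoint from the existing subdivided paths, producing a subdivision of $K_{5}$ or $K_{3,3}$ in $G^{+}$.

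For the converse direction (which yields sufficiency) I would analyse the role of $v$ in a subdivision $S$ of $K_{5}$ or $K_{3,3}$ lying in $G^{+}$. If $v\notin S$ then $S\subseteq G$ already and contains the desired smaller subdivision. If $v$ is an interior vertex of some subdivided edge of $S$, then deleting $v$ breaks a single path and leaves a subdivision of $K_{5}-e$ or $K_{3,3}-e$ in $G=G^{+}\setminus v$, and one checks directly that $K_{5}-e\supseteq K_{4}$ and $K_{3,3}-e\supseteq K_{2,3}$. If $v$ is a branch vertex, then removing $v$ and the paths emanating from it leaves a subdivision of $K_{4}$ on the remaining four branch vertices of $K_{5}$, or of $K_{2,3}$ on the remaining two branch vertices of one part of $K_{3,3}$ against the three branch vertices of the other. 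In every case $G$ contains a subgraph homeomorphic to $K_{4}$ or $K_{2,3}$, as required. These deletion checks are routine; the genuine obstacle throughout is the reverse half of the apex lemma. (A more self-contained alternative, which I would fall back on if one wishes to avoid invoking Kuratowski, is to treat $G$ block by block and show that a $2$-connected graph avoiding $K_{4}$- and $K_{2,3}$-subdivisions is a cycle with non-crossing chords, hence outerplanar.)
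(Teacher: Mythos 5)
Your proposal is correct, but there is nothing in the paper to compare it against: the paper does not prove this theorem at all, it simply quotes it as a known result of Chartrand and Harary \cite{C1}. What you have written is, in essence, the classical proof of that cited theorem. The apex lemma ($G$ is outerplanar if and only if $G^{+}=G\vee K_{1}$ is planar) is sound in both directions: one places the apex in the outer face of an outerplanar embedding of $G$; the other deletes the apex from a planar embedding of $G^{+}$ and observes that the faces incident with it merge into a single face whose boundary contains every vertex of $G$ (each such vertex being an end of an edge to the apex), after which a homeomorphism of the sphere makes that face the outer one --- you rightly flag this as the step needing topological care. Your translation step is also complete: the easy direction correctly attaches $v$ to the branch vertices of a $K_{4}$- or $K_{2,3}$-subdivision to produce a $K_{5}$- or $K_{3,3}$-subdivision (with $v$ together with the $2$-side forming one part of the $K_{3,3}$), and the converse case analysis --- the apex absent from the subdivision, interior to a branch path, or itself a branch vertex --- is exhaustive, with the routine verifications $K_{5}-e\supseteq K_{4}$, $K_{3,3}-e\supseteq K_{2,3}$, and the branch-vertex deletions all correct. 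So your argument would serve as a self-contained proof of the statement that the paper leaves to the literature.
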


\begin{picture}(300,240)(-20,-50)
\put(0,0){\circle*{5}}
\put(300,0){\circle*{5}}
\put(150,75){\circle*{5}}
\put(150,150){\circle*{5}}
\put(0,0){\line(1,0){300}}
\put(0,0){\line(2,1){150}}
\put(150,75){\line(2,-1){150}}
\put(0,0){\line(1,1){150}}
\put(150,75){\line(0,1){75}}
\put(150,150){\line(1,-1){150}}
\put(-10,0){1}
\put(148,155){2}
\put(305,0){3}
\put(140,75){4}
\multiput(140,30)(20,0){2}{\line(0,1){20}}
\multiput(140,30)(0,20){2}{\line(1,0){20}}
\put(148,35){$A$}
\put(150,75){\line(0,-1){25}}
\qbezier(0,0)(70,20)(140,40)
\put(125,-20){\sc Figure 1}
\end{picture}

Consider Figure 1.  Suppose that the vertices labelled 1 through 4 are the vertices of $\mathrm{con}(u)$
for a vertex $u$ of valency 4 in a planar graph $X$ and the graph depicted in the figure is a subgraph
of a generalized truncation of $X$.  These four vertices have been joined
to form a constituent that is $K_4$.  The crucial vertex here is 4 because the corresponding edge incident with $u$
in $X$ cannot pass through the edges of the 3-cycle formed by 1, 2 and 3.  Hence, this edge must
be the edge from 4 to the subgraph indicated by $A$.  The edges of $X$ incident with $u$ corresponding to 1 and 3
may or may not be incident with vertices in $A$.  Figure 1 has been drawn so that the edge corresponding
to 1 also joins a vertex in $A$.  This figure indicates how a generalized truncation of a planar graph
may possess a constituent graph which is not outerplanar.

Let $Y$ be a planar cohesive generalized truncation of a planar graph $X$.
If we have a connected constituent that contains a subdivision of either $K_4$ or $K_{2,3}$, then there
is some vertex $x$ which is contained in the interior of the region bounded by the constituent.  Every vertex of the
constituent is labelled with $u$ so that the edge of $X$ incident with $u$ corresponding to $x$ has its other end
in some face $F$ of the constituent. Hence, there is a subgraph $Y'$ of $Y$ contained in the face $F$.  Any edges
coming into $Y'$ from vertices on the boundary of $F$ have label $u$ on the vertex from the boundary.
Thus, $u$ is a cut vertex in $X$ for whichever edges of $Y'$ initially belonged to $X$.  This implies that $X$
is not 2-connected.   The next theorem now follows.

\begin{thm} A cohesive generalized truncation $Y$ of a 2-connected planar graph $X$ is planar if and only if
every constituent of $Y$ is outerplanar.
\end{thm}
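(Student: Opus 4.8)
The plan is to prove the two implications separately. The reverse implication---if $Y$ is planar then every constituent is outerplanar---is essentially the content of the paragraph preceding the statement, so I would simply formalize that argument. The forward implication---if every constituent is outerplanar then $Y$ is planar---I would attempt through the disc construction described earlier in the section, and this is where I expect essentially all of the difficulty to lie.

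For the reverse implication I would argue by contraposition. Suppose some constituent $\mathrm{con}(u)$ is not outerplanar. Since $Y$ is cohesive (Definition \ref{cohes}), $\mathrm{con}(u)$ is connected, so by Theorem \ref{outer} it contains a subdivision of $K_4$ or of $K_{2,3}$; in particular no planar drawing of $\mathrm{con}(u)$ places all of its vertices on a single face. Assume for contradiction that $Y$ is planar and fix a plane embedding. The induced subdrawing of $\mathrm{con}(u)$ is a plane embedding of a non-outerplanar graph, so some vertex $x$ of $\mathrm{con}(u)$ fails to lie on the outer face of that subdrawing, and is therefore enclosed by a cycle of $\mathrm{con}(u)$. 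The matching edge of $F(M)$ incident with $x$ leaves into a bounded face $F$, and everything reachable from $x$ in $Y$ without crossing $\mathrm{con}(u)$ is confined to $F$. Every vertex of $\mathrm{con}(u)$ carries the label $u$, so after contracting the constituents to recover $X$ the portion of $Y$ trapped in $F$ is separated from the rest by the single vertex $u$. Hence $u$ is a cut vertex of $X$, contradicting $2$-connectivity. This shows that planarity of $Y$ forces every constituent to be outerplanar.

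For the forward implication I would fix a plane embedding of $X$ and carry out the disc construction: draw a small disc around each vertex $v$, cut the incident edges at the disc boundaries so as to realize $F(M)$ in the plane, and then insert an outerplanar drawing of each $\mathrm{con}(v)$ inside its disc with all cluster vertices on the boundary circle. If each constituent can be drawn so that its cluster vertices meet the boundary in the same cyclic order $\sigma_v$ in which the corresponding edges of $X$ emanate from $v$, then the fragments of $F(M)$ reattach without crossings and we obtain a plane drawing of $Y$.

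The hard part is precisely the compatibility of cyclic orders in that last step, and I expect it to be the main obstacle. Outerplanarity of $\mathrm{con}(v)$ guarantees only that \emph{some} cyclic order of its vertices is realizable on the outer face, not that the particular order $\sigma_v$ inherited from $X$ is; the achievable boundary orders are constrained by the $2$-connected blocks of $\mathrm{con}(v)$, each contributing a rigid cyclic order up to reflection. For instance a $4$-cycle constituent can realize only the boundary orders equivalent to its own cyclic order, so if the rotation at $v$ prescribes a different order the naive insertion forces a crossing. Moreover, since a $2$-connected (and a fortiori a $3$-connected) planar $X$ has its rotation system essentially pinned down by Whitney's theorem, there may be little freedom to adjust $\sigma_v$. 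Thus the genuine work lies in showing an outerplanar constituent can always be embedded realizing the required rotation, and I would flag that outerplanarity by itself does not obviously supply this. Resolving it---by exploiting the remaining freedom in the embedding of $X$ and in the block arrangement of each constituent, or by isolating the precise compatibility condition the statement needs---is, in my view, the crux of the forward direction.
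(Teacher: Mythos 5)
Your treatment of the direction ``$Y$ planar $\Rightarrow$ every constituent outerplanar'' coincides with the paper's own proof: the paper establishes exactly this, by exactly your contrapositive argument (a non-outerplanar constituent $\mathrm{con}(u)$ traps a vertex $x$ inside a cycle, the matching edge at $x$ leads into a bounded face, and everything in that face meets the rest of $Y$ only through vertices labelled $u$, so $u$ is a cut vertex of $X$), in the paragraph immediately preceding the theorem. Nothing to change there.

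For the converse, however, the paper has no more of a proof than you do: it relies entirely on the earlier disc construction and the sentence ``if we now insert an outerplanar graph for each constituent, it is clear that the resulting generalized truncation is planar.'' The rotation-compatibility problem you raise is precisely what that sentence overlooks, and your instinct that it cannot be waved away is correct --- in fact it kills the statement. Take $X$ to be the wheel $W_4$: hub $u$ joined to the rim cycle $r_1r_2r_3r_4$; this $X$ is planar and $3$-connected. Write $\mathrm{cl}(u)=\{u_1,u_2,u_3,u_4\}$, where $u_i$ arises from the edge $ur_i$, and let $\mathrm{con}(u)$ be the ``twisted'' $4$-cycle with edges $u_1u_3,\ u_3u_2,\ u_2u_4,\ u_4u_1$; let each rim constituent be a triangle. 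Every constituent is connected and outerplanar, so this $Y$ satisfies all hypotheses of the ``if'' direction. Now contract each rim triangle to a single vertex $R_i$, obtaining a minor $Z$ of $Y$ consisting of the rim cycle $R_1R_2R_3R_4$, the spokes $R_iu_i$, and the cycle $u_1u_3u_2u_4$. Then $Z$ contains a subdivision of $K_{3,3}$ with branch classes $\{R_1,R_3,u_2\}$ and $\{R_2,R_4,u_3\}$, the nine internally disjoint paths being $R_1R_2$, $R_1R_4$, $R_1u_1u_3$, $R_3R_2$, $R_3R_4$, $R_3u_3$, $u_2R_2$, $u_2u_4R_4$ and $u_2u_3$. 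So $Z$, and hence $Y$, is non-planar, and the ``if'' direction of the theorem is false as stated. What the disc construction genuinely proves is the statement with the hypothesis you isolated: $Y$ is planar provided there is a plane embedding of $X$ in which each $\mathrm{con}(v)$ admits an outerplanar embedding whose cyclic order of $\mathrm{cl}(v)$ on the outer face agrees with the rotation of $X$ at $v$. So the ``gap'' in your proposal is not a defect of your argument but of the theorem itself; declining to claim the forward direction was the right call.
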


\section{Colorings}

We now consider vertex and edge colorings of generalized truncations. Recall that a {\it proper coloring} of a
multigraph $X$ is a coloring of the vertices so that adjecent vertices do not have the same color.  Similarly, a
{\it proper edge coloring} is a coloring of the edges so that adjacent edges do not have the same color.  The
{\it chromatic number} of $X$, denoted $\chi(X)$, is the fewest number of colors for which a proper coloring
exists, and the {\it chromatic index}, denoted $\chi'(X)$, is the fewest number of colors for which a proper
edge coloring exists. 

A small hint of the kind of behavior that may occur is exemplified by the following.  The graph $K_3$ has both
chromatic number and chromatic index 3.  The complete generalized truncation is the 6-cycle which has
chromatic number and chromatic index 2.  On the other hand, if $X$ is a bipartite graph, then we need at
least $k$ colors to color the vertices of a truncation, where $X$ has a vertex of valency $k$.  So we may
need to introduce many colors when we move from a graph with chromatic number 2 to a generalized truncation.

Vizing's well-known theorem tells us that the chromatic index of a graph equals the maximum
valency or the maximum valency plus one.  This, in turn, leads to a classification of graphs as
follows.  A graphs is {\it class} I if its chromatic index is equal to its maximum valency and is
{\it class} II otherwise.

\begin{thm}\label{color} If $X$ is a class {\em I} graph, then its complete generalized truncation also is
class {\em I}.  If $X$ is a class {\em II} graph and its maximum valency is even, then its
complete generalized truncation is class {\em I}.
\end{thm}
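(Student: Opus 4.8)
The plan is to exhibit a proper $\Delta$-edge-coloring of $Y$, where $\Delta=\Delta(X)$. Since each vertex of $\mathrm{con}(u)$ is incident with one edge of the matching $F(M)$ together with $\mathrm{val}(u)-1$ edges of the complete constituent $K_{\mathrm{val}(u)}$, every vertex of $Y$ has valency equal to $\mathrm{val}(u)$ for its home vertex $u$, so $\Delta(Y)=\Delta$. Thus a proper $\Delta$-coloring certifies $\chi'(Y)=\Delta(Y)$, i.e.\ that $Y$ is class I. I would split the argument by the parity of $\Delta$, the crucial observation being that the even case needs no hypothesis on the class of $X$, whereas the odd case is exactly where ``$X$ is class I'' is used.

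Suppose first that $\Delta$ is even. Color every edge of $F(M)$ with a single new color $0$; this is legitimate because $F(M)$ is a matching. It then suffices to color each constituent independently, avoiding $0$, which is just a proper edge-coloring of $K_d$, $d=\mathrm{val}(u)$, drawn from $\{1,\dots,\Delta-1\}$. Because $\chi'(K_d)=d-1$ for even $d$ and $\chi'(K_d)=d$ for odd $d$, and because $\Delta$ even forces every odd $d\le\Delta$ to satisfy $d\le\Delta-1$, each constituent can be colored with at most $\Delta-1$ colors. At each vertex the color $0$ of its matching edge is distinct from every constituent color, so the coloring is proper and uses at most $\Delta$ colors. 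This settles both the class I and the class II subcases in which $\Delta$ is even.

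Now suppose $\Delta$ is odd. The class II hypothesis is then vacuous, so $X$ is class I and has a proper $\Delta$-edge-coloring $c_X$. I would color each matching edge of $F(M)$ by the color $c_X$ assigns to the corresponding edge of $X$. The matching edges meeting a fixed $\mathrm{con}(u)$ then carry pairwise distinct colors, which I attach to the vertices of that constituent as distinct \emph{labels}. The problem reduces to a purely local one: properly edge-color each $K_d$ from $\{1,\dots,\Delta\}$ so that the edges at the vertex with label $\ell$ avoid the color $\ell$. Together with the matching edge of color $\ell$ this is proper at every vertex, and colorings on different constituents cannot interfere, since distinct constituents share no edges beyond the already-colored matching edges. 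For the local colorings I would use circulant ``sum'' colorings: when $d$ is odd, place the vertices at the residues of $\mathbb{Z}_d$ and color the edge between $i$ and $j$ by $i+j$, so vertex $i$ meets every color but $2i$; since $2$ is invertible modulo the odd number $d$, one can relabel the colors so that each vertex's avoided color is exactly its prescribed label, using only the $d$ labels. When $d$ is even I would instead work in $\mathbb{Z}_{d+1}$, placing the $d$ vertices at the nonzero residues, reserving residue $0$ for one extra color $c^{*}$ outside the label set, and again coloring the edge between $i$ and $j$ by $i+j$; the vertex at residue $i$ then avoids the colors indexed by $i$ and $2i$, hence in particular avoids its label $\ell_i$. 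This consumes $d+1$ colors, and the surplus color is available precisely because $\Delta$ odd forces every even $d\le\Delta$ to satisfy $d\le\Delta-1$.

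The step I expect to be the crux is the even-sized constituent in the odd-$\Delta$ case, where a genuine parity obstruction appears: in any proper coloring of $K_d$ with exactly $d$ colors every color class is a matching missing an even number of vertices, so no vertex can be made to miss a color entirely without introducing a surplus color. This is why the $(d+1)$st color $c^{*}$ is forced here, and dually why the clean monochromatic-matching device of the even-$\Delta$ case breaks down when $\Delta$ is odd and some constituent has size $\Delta$. Checking that the two circulant recipes are proper, and that the reserved color $c^{*}$ (chosen outside the labels of its own constituent) never collides with a matching color, is routine; assembling the matching coloring with the per-constituent colorings then yields a proper $\Delta$-edge-coloring of $Y$, establishing that $Y$ is class I in both asserted cases.
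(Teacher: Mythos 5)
Your proposal is correct and follows essentially the same route as the paper: split on the parity of $\Delta$, handle even $\Delta$ by putting one new color on the matching $F(M)$ and coloring each constituent with at most $\Delta-1$ colors, and handle odd $\Delta$ by inheriting the class~I coloring of $X$ on the matching edges and then coloring each constituent so that every vertex avoids the color of its own matching edge. Your circulant ``sum'' colorings (including placing the even-order constituents inside $\mathbb{Z}_{d+1}$ with residue $0$ reserved) are just an explicit realization of the near-one-factorizations and artificial-vertex trick that the paper invokes, so the two arguments coincide in substance.
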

\begin{proof} Let $X$ be a graph whose maximum valency $d$ is even and let its complete
generalized truncation be $Y$.  Every constituent of $Y$ of order $d$ admits a proper edge
coloring with $d-1$ colors because $d$ is even.  Any constituent of order less than $d$ admits
a proper edge coloring with at most $d-1$ colors.  The edges of $Y$ with end vertices in
different constituents form a perfect matching in $Y$.  Color all of these edges with a single
new color, thereby obtaining a proper edge coloring of $Y$ with $d$ colors.  The maximum
valency of $Y$ is $d$ so that $Y$ is class I.

Now suppose that $X$ has maximum valency $d$ and $d$ is odd.  We know that $X$ is class I
by hypothesis. Because $X$ is class I, it has a proper edge coloring using $d$ colors.  In forming
$Y$, retain the colors on the edges between the constituents.  We now describe decompositions
of the constituents into matchings which may be used to color the edges of the constituents so
that we obtain a proper edge coloring of $Y$ without introducing any new colors, thereby
establishing that $Y$ is class I.  

If a constituent has odd order $n$, then decompose its edges into $n$ matchings of size
$(n-1)/2$ so that for each vertex of the constituent there is a unique $(n-1)/2$-matching
missing the given vertex. 

If a constituent has even order $n$, we do something unusual.  Add a new artificial vertex
giving us a complete graph of odd order $n+1$.  We then take the decomposition of
$K_{n+1}$ into $n+1$ matchings of size $n/2$.  Now remove the artificial vertex leaving
us with a decomposition of $K_n$ into one perfect matching and $n$ matchings of size
$(n-2)/2$, where each of the latter matchings miss precisely two vertices.

We now prescribe how to color the edges of the constituents.  If a constituent $\mathrm{con}(u)$
has odd order $n$, then $u$ is incident with $n$ edges with distinct colors in $X$.  If a
given edge incident with $u$ in $X$ has color $\alpha$, then the corresponding edge in $Y$ still
has color $\alpha$.  There is a unique matching in the decomposition of $\mathrm{con}(u)$
missing the vertex of $\mathrm{con}(u)$ incident with the edge of color $\alpha$.  Color
the edges of this matching with color $\alpha$. 

If a constituent $\mathrm{con}(u)$ has even order $n$, then $n<d$ and there is a color
$\alpha$ such that no edge of color $\alpha$ is incident with $u$ in $X$.  Then use $\alpha$
to color the edges of the perfect matching in $\mathrm{con}(u)$.  Color the remaining edges
of $\mathrm{con}(u)$ as done in the case when $n$ is odd.  We now have a proper edge
coloring of $Y$ with $d$ colors so that $Y$ is class I.     \end{proof}

\medskip

There is a notable missing possibility in Theorem \ref{color}, namely, $X$ is class II and its
maximum valency is odd.  As is typical for a situation such as this, we look at the Petersen graph.
It is not difficult to see that the complete generalized truncation $Y$ of the Petersen graph is
class II.  Suppose this was not the case.  Then $Y$ would have a 1-factorization and the
union of two of the 1-factors would form a 2-factor of $Y$ whose components would be
cycles of even length.  Because a 2-factor must contain every vertex of $Y$, each cycle
of the 2-factor must use all three vertices of a constituent when it passes through a
constituent.  Thus, the 2-factor of $Y$ corresponds to a 2-factor of the Petersen graph.
However, all the 2-factors in the Petersen graph consist of two 5-cycles which implies the
only 2-factors in $Y$ consist of two 15-cycles.  A 15-cycle cannot be a cycle in the union of
two 1-factors.

\medskip

{\bf Research Problem 7}.  Characterize the class II generalized truncations of multigraphs.

\medskip

\begin{cor} Let $X$ be a regular graph of valency $d$.  If $d$ is even or $X$ is class I, then
the complete generalized truncation $Y$ of $X$ admits a 1-factorization.
\end{cor}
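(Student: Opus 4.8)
The plan is to derive this corollary directly from Theorem~\ref{color}. The key observation is that a 1-factorization of a regular multigraph of valency $d$ is precisely a proper edge coloring using exactly $d$ colors, so a $d$-regular graph admits a 1-factorization if and only if it is class I. Thus the entire task reduces to showing that the complete generalized truncation $Y$ of $X$ is class I under either hypothesis, and then verifying that $Y$ is regular so that a class I edge coloring is in fact a 1-factorization.

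First I would record that because $X$ is $d$-regular, every vertex of $X$ has valency $d$, so every constituent of the complete generalized truncation $Y$ has order exactly $d$. Each such constituent is a complete graph $K_d$, in which every vertex has valency $d-1$; adding the single matching edge of $F(M)$ incident with each vertex raises its valency in $Y$ to $d$. Hence $Y$ is $d$-regular. This regularity is what lets us convert a proper $d$-edge-coloring into a 1-factorization.

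Next I would split into the two cases of the hypothesis and invoke Theorem~\ref{color} in each. If $d$ is even, then whether $X$ is class I or class II, Theorem~\ref{color} guarantees that the complete generalized truncation $Y$ is class I (this is exactly the content of the two statements of that theorem combined: the class I case of $X$ gives class I $Y$, and the class II case with even maximum valency also gives class I $Y$). If instead $X$ is class I, then again by the first sentence of Theorem~\ref{color} the truncation $Y$ is class I. In either branch, $Y$ has chromatic index equal to its maximum valency $d$, so $Y$ admits a proper edge coloring with $d$ colors.

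Finally I would combine the two facts: $Y$ is $d$-regular and class I, so a proper edge coloring with $d$ colors partitions $E(Y)$ into $d$ color classes, each of which must be a perfect matching because every vertex, having valency $d$, meets each color exactly once. This is precisely a 1-factorization, completing the proof. I do not anticipate a genuine obstacle here, since the corollary is an immediate packaging of Theorem~\ref{color}; the only point requiring a moment's care is verifying that regularity of $X$ forces every constituent to have order exactly $d$ (not merely at most $d$), which is what makes $Y$ regular and hence allows the class I coloring to be read as a 1-factorization.
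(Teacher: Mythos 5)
Your proposal is correct and takes essentially the same route as the paper: invoke Theorem~\ref{color} (covering both hypotheses) to conclude that $Y$ is class~I, observe that $Y$ is $d$-regular, and read the $d$ color classes as a 1-factorization. You merely spell out two points the paper leaves implicit, namely that every constituent has order exactly $d$ (making $Y$ regular) and that the case ``$d$ even'' splits across both sentences of the theorem.
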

\begin{proof} The complete generalized truncation of $X$ is regular of valency $d$ and is
class I by Theorem \ref{color}.  This implies that each color class of edges must be a 1-factor.
The result now follows.    \end{proof}

\medskip

{\bf Research Problem 8}.  Determine conditions on the source multigraph and constituents
so that a generalized truncation has a 1-factorization.

\medskip

The spectrum problem for chromatic indices of generalized truncations of a given graph $X$
is straightforward because the minimum generalized truncation is a perfect matching of size
$|E(X)|$ for which the chromatic index is 1.  The maximum value occurs for the chromatic index
of the complete generalized truncation of $X$ which is either the maximum valency of $X$
or the maximum valency plus one.  By adding one edge at a time and realizing the chromatic
index stays the same or increases by one, it is easy to that there are generalized truncations
of $X$ realizing all possible values between one and the upper bound.  However, the problem
takes on more interest if we restrict ourselves to cohesive generalized truncations.

Given a graph $X$, what is the minimum chromatic index of a cohesive generalized truncation
of $X$?  Once that is known, it is easy to see that all values from that point to the maximum
possible value are achieved by a cohesive generalized truncation.

\begin{thm} Let $X$ be a multigraph with maximum valency $d>2$.  If the chromatic index of
the complete generalized truncation of $X$ is $D$, then for every $k$ satisfying $3\leq k\leq D$,
there is a cohesive generalized truncation of $X$ with chromatic index $k$.
\end{thm}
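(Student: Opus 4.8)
The plan is to reduce the theorem to two ingredients: a cohesive generalized truncation whose chromatic index is exactly $3$, serving as the bottom of the spectrum, and the complete generalized truncation, whose chromatic index is $D$ by hypothesis, serving as the top. Between these I would interpolate by adding one edge at a time inside the constituents, exploiting the elementary fact that inserting a single edge into a graph changes its chromatic index by either $0$ or $1$. A discrete intermediate value argument then forces every integer in $\{3,4,\dots,D\}$ to occur.

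For the bottom, fix the excision data $F(M)$ once and for all, and let $Y_3$ be the cohesive generalized truncation in which each constituent $\mathrm{con}(v)$ is a spanning path on its cluster $\mathrm{cl}(v)$ (a single vertex or a single edge when $|\mathrm{cl}(v)|\le 2$). Each such constituent is connected, so $Y_3$ is cohesive. Because $d>2$, some cluster has at least three vertices, whose path has an interior vertex of degree $2$ in its constituent; together with its matching edge this produces a vertex of valency $3$ in $Y_3$, while no vertex can exceed valency $3$ since a path has maximum degree $2$. Hence $\Delta(Y_3)=3$ and $\chi'(Y_3)\ge 3$. For the reverse inequality I give an explicit proper $3$-edge-colouring: assign colour $1$ to every edge of $F(M)$, which is legitimate because these edges form a perfect matching, and colour the edges of each constituent path alternately with colours $2$ and $3$. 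At any vertex the unique incident matching edge has colour $1$, and the at most two incident path edges are consecutive on a path and hence receive distinct colours from $\{2,3\}$; the colouring is therefore proper and $\chi'(Y_3)=3$.

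For the interpolation, note that $Y_3$ and the complete generalized truncation $Y_D$ share the same clusters and the same matching $F(M)$, differing only in that each path constituent of $Y_3$ is a spanning subgraph of the corresponding complete constituent of $Y_D$. I would therefore add the missing constituent edges one at a time. Every intermediate graph is again a generalized truncation of $X$ built on $F(M)$, and it is cohesive because each of its constituents still contains the spanning path and is thus connected. Since a proper edge-colouring restricts to any subgraph, and since a fresh colour can always be placed on a newly added edge, passing from $Y$ to $Y+e$ leaves the chromatic index unchanged or increases it by exactly $1$. The sequence of chromatic indices therefore starts at $\chi'(Y_3)=3$, ends at $\chi'(Y_D)=D$, and changes by $0$ or $1$ at each step, so it assumes every value $k$ with $3\le k\le D$; the truncation at which the value $k$ first occurs is the required cohesive generalized truncation.

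The routine points---that the path truncation is cohesive, that the matching colour class is independent, and that cohesiveness survives edge additions---are immediate. The one place that genuinely uses the hypothesis $d>2$ is the claim $\chi'(Y_3)=3$ rather than something smaller: the hypothesis is exactly what guarantees a valency-$3$ vertex in $Y_3$, pinning the bottom of the spectrum at $3$ and preventing the interpolation from starting too low. I expect this verification, together with checking that the explicit three-colouring is proper at every vertex, to be the only real content; the interpolation itself is a standard monotone-staircase argument.
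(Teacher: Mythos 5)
Your proposal is correct and follows essentially the same route as the paper's own proof: spanning-path constituents coloured with two colours plus a third colour on the matching $F(M)$ to pin the bottom of the spectrum at $3$, followed by adding constituent edges one at a time and invoking the fact that each addition changes $\chi'$ by $0$ or $1$. Your explicit justification that $\chi'(Y_3)\geq 3$ via a valency-$3$ vertex (guaranteed by $d>2$) is in fact slightly more careful than the paper's wording, but it is the same argument in substance.
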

\begin{proof} The idea is to make the generalized truncation cohesive using as few edges as
possible.  The way to do this is to insert a spanning path on the vertices with the same label
in $F(M)$ so that each constituent is a path.  We then color the edges of the constituents
with one or two colors and note that two colors are required because one of the constituents
has order at least three.  We then color the edges whose ends lie in different constituents
with a third color giving us a cohesive generalized truncation with chromatic index 3.

We then add one edge at a time until reaching the complete generalized truncation.  It is
clear that we achieve a cohesive generalized truncation with chromatic index $k$ for all $k$
satisfying $3\leq k\leq D$.      \end{proof}

\medskip

Recall that Brooks' Theorem \cite{B1} states that the chromatic number of a graph $X$ is
bounded above by its maximum valency unless $X$ is complete or an odd length cycle.  This
gives us a quick proof of the next result.

\begin{thm}\label{chr} If $X$ is a multigraph with maximum valency $d>1$, then its complete
generalized truncation $Y$ satisfies $\chi(Y)=d$.
\end{thm}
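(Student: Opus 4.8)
The plan is to determine $\Delta(Y)$ exactly, read off the lower bound $\chi(Y)\geq d$ from a complete subgraph, and then obtain the matching upper bound from Brooks' Theorem; the only genuine work is to exclude the two exceptional cases that Brooks allows.

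First I would record the valency structure of the complete generalized truncation. A vertex $x$ lying in the constituent $\mathrm{con}(v)$ is incident with $\mathrm{val}(v)-1$ edges inside $\mathrm{con}(v)$, which is the complete graph $K_{\mathrm{val}(v)}$, together with the single edge of $F(M)$ that leaves the constituent, so $\mathrm{val}_Y(x)=\mathrm{val}(v)$. Taking the maximum over $v$ gives $\Delta(Y)=d$. For the lower bound, the vertex of $X$ realizing the maximum valency $d$ has constituent $K_d$, which sits as a subgraph of $Y$; since $d>1$ this forces $\chi(Y)\geq\chi(K_d)=d$.

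For the upper bound I would invoke Brooks' Theorem, which yields $\chi(Y)\leq\Delta(Y)=d$ provided no component of $Y$ is a complete graph $K_{d+1}$ or an odd cycle. Both exceptions are excluded. Since $F(M)$ is a perfect matching of $Y$ and its restriction to any component is again a perfect matching of that component, every component of $Y$ has even order; in particular no component is an odd cycle. To rule out $K_{d+1}$, suppose some component $C$ were complete on $d+1$ vertices. Every vertex of $C$ then has $Y$-valency $d$, so by the valency computation above each constituent contained in $C$ satisfies $|\mathrm{con}(v)|=\mathrm{val}(v)=d$; counting vertices gives $d+1=t\,d$, where $t$ is the number of constituents inside $C$, which forces $d\mid d+1$ and hence $d=1$, contrary to hypothesis. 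Thus every component satisfies $\chi\leq d$, so $\chi(Y)\leq d$, and combining with the lower bound yields $\chi(Y)=d$.

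I expect the main obstacle to be the clean exclusion of the complete-graph exception in Brooks' Theorem. The odd-cycle exception is dispatched immediately by the even-order observation, but showing that $K_{d+1}$ cannot appear as a component requires the divisibility count above, and this is precisely the place where the hypothesis $d>1$ is used.
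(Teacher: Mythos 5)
Your proposal is correct and takes essentially the same route as the paper: the lower bound $\chi(Y)\geq d$ from the clique $K_d$ forming the constituent at a maximum-valency vertex, and the upper bound from Brooks' Theorem after ruling out the complete-graph and odd-cycle exceptions. The only difference is in how the exceptions are excluded --- the paper notes that $Y$ has even order (so is no odd cycle) and cannot be complete because the edges between constituents form a perfect matching, whereas you work component-by-component with a divisibility count $d+1=td$; your version is in fact slightly more careful, since Brooks' Theorem in the form quoted applies to connected graphs and the paper's argument glosses over the case where $X$, and hence $Y$, is disconnected.
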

\begin{proof} Let $X$ be a multigraph satisfying the hypotheses and let $Y$ be its complete
generalized truncation.  Then $Y$ contains a clique of order $d$ from which it follows that
$\chi(Y)\geq d$.  The result follows from Brooks' Theorem if we show that $Y$ is neither an
odd length cycle nor a complete graph.  The order of $Y$ is even so that it cannot be an odd
length cycle.  The order of $X$ is at least two so that $Y$ contains at least two constituents
and there is at least one constituent $\mathrm{con}(u)$ of order bigger than one.  The edges
between constituents form a perfect matching so that $Y$ is not complete.     \end{proof}

\medskip

Consider the spectrum problem for the chromatic numbers of generalized truncations for a
fixed graph $X$.  Theorem \ref{chr} provides an upper bound so that we want to determine
the minimum chromatic number for a cohesive generalized truncation of $X$.  If we again use
a spanning path for each constituent, then the maximum valency for $Y$ is three except
for a few exceptions.  So Brooks' Theorem tells us the chromatic number for such a generalized
truncation is 3 for the unexceptional graphs.

The exceptions arise  if the maximum valency of $X$ is 2.  The complete generalized truncation
of $2K_2$ is a 4-cycle, the complete generalized truncation of an $n$-cycle is a $2n$-cycle,
and the complete generalized truncation of a path of length $n$ is a path of length $2n$.
All of these graphs have chromatic number 2.  The following result follows from these comments
and Theorem \ref{chr}.

\begin{thm} Let $X$ be a multigraph with maximum valency $d>1$.  If $d=2$, then every
cohesive generalized truncation of $X$ also has chromatic number 2.  If $d>2$, then for
every $k$ satisfying $3\leq k\leq d$, there is a cohesive generalized truncation of $X$
with chromatic number $k$.
\end{thm}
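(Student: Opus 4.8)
The statement splits according to whether $d=2$ or $d>2$, and I would treat the two cases separately.

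For the case $d=2$, the plan is to observe that cohesiveness forces the truncation to coincide with the complete one and then invoke bipartiteness. Since every vertex of $X$ has valency $1$ or $2$, each cluster $\mathrm{cl}(v)$ has one or two vertices, and the only connected graph on such a set is $K_1$ or $K_2$; hence every constituent of a \emph{cohesive} truncation is already complete. Thus $X$ is a disjoint union of paths, cycles, and double edges, and, as recorded in the discussion immediately preceding the theorem, its complete generalized truncation is a disjoint union of paths and even cycles. Such a graph is bipartite, so its chromatic number is at most $2$; since $X$ has no isolated vertices the matching $F(M)$ is nonempty and $Y$ has at least one edge, forcing $\chi(Y)=2$.

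For the case $d>2$, I would exhibit one family of cohesive truncations interpolating between two extremes. The plan has three steps. First, the complete generalized truncation $Y_{\mathrm{top}}$ satisfies $\chi(Y_{\mathrm{top}})=d$ by Theorem~\ref{chr}. Second, the cohesive truncation $Y_0$ obtained by inserting a spanning path into each cluster satisfies $\chi(Y_0)\le 3$. Third, passing from $Y_0$ to $Y_{\mathrm{top}}$ by inserting the missing constituent edges one at a time produces a sequence of cohesive truncations whose chromatic numbers form a nondecreasing integer sequence with consecutive terms differing by at most $1$; starting from a value at most $3$ and ending at $d$, such a sequence must realize every integer in $\{3,4,\dots,d\}$. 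Each graph along the way is cohesive because adjoining edges to a connected spanning path keeps the constituent connected and leaves the between-cluster edges untouched.

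For the bound $\chi(Y_0)\le 3$ I would apply Brooks' Theorem componentwise. Every interior vertex of a spanning path has valency $2$ inside its constituent and hence valency $3$ in $Y_0$, while path endpoints have valency $2$, so $\Delta(Y_0)=3$. It then remains only to rule out the Brooks exceptions on each component $C$: an odd cycle has maximum valency $2$ and in any event contributes $\chi=3\le 3$, while a $K_4$ component would force all four of its vertices to be interior to their constituent paths, which is impossible since a path has valency-$1$ endpoints. Hence $\chi(C)\le 3$ for every component and $\chi(Y_0)\le 3$.

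The step deserving the most care, and the main obstacle, is the interpolation of step three: one must verify that inserting a single edge never decreases the chromatic number and raises it by at most one, so that no value in $\{3,\dots,d\}$ is skipped, while simultaneously confirming that each intermediate graph really is a cohesive generalized truncation of $X$ (constituent connectivity is preserved and $F(M)$ is unchanged). Once these monotonicity and cohesiveness checks are in place, the discrete intermediate value principle delivers the claimed spectrum $\{3,4,\dots,d\}$.
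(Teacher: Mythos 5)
Your proposal is correct and takes essentially the same route as the paper: spanning-path constituents plus Brooks' Theorem give the bottom value, Theorem~\ref{chr} gives the top value $d$, one-edge-at-a-time insertion interpolates all intermediate chromatic numbers, and the $d=2$ case reduces to complete truncations of paths, cycles and $2K_2$, which are paths and even cycles. You merely make explicit some details the paper leaves implicit, namely that cohesive constituents on clusters of size at most two are automatically complete, and the verification that the Brooks exceptions and the monotone interpolation behave as claimed.
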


\section{Conclusion}

The topic of generalized truncations of reflexive multigraphs may be viewed as very old in the
sense that a special version of it was studied by the ancient Greeks.  A more general version has
been introduced, studied somewhat and even then only in special circumstances.  The general
version presented in this paper is a further extension in what we see as a natural way to proceed.  

The purpose of this paper is to encourage people to study the many possible directions the
topic may proceed.  We have only scratched the surface.  If others pursue this topic, the
authors will be pleased.

Finally, many of the results presented in this paper are contained in the honours thesis
submitted by the second author in June 2020 to the University of Newcastle.


\begin{thebibliography}{9999}
\bibitem{A2} B. Alspach, The wonderful Walecki construction, {\sl Bull. Inst. Combin. App.}
{\bf 52} (2008), 7--20.
\bibitem{A1} B. Alspach and E. Dobson, On automorphism groups of graph truncations,
{\sl Ars Math. Contemp.} {\bf 8} (2014), 215--223.+
\bibitem{B2} M. Boben, R. Jajcay and T. Pisanski, Generalized cages, {\sl Electron. J. Combin.}
{\bf 22} (2015), \#P1.77.
\bibitem{B1} R. L. Brooks, On colouring the nodes of a network, {\sl Math. Proc. Cambridge
Philos. Soc.} {\bf 37} (1941), 194--197.
\bibitem{C1} G. Chartrand and F. Harary, Planar Permutation Graphs, {\sl Ann. Inst. H. Poincar\'e
Probab. Statist. B} {\bf 3} (1967), 433--438.
\bibitem{D1} M. Diudea and V. Rosenfeld, The truncation of a cage graph, {\sl J. Math. Chem.} {\bf 55}
(2017), 1014--1020.
\bibitem{E2} E. Eiben, R. Jajcay and P. \v{S}parl, Symmetry properties of generalized graph 
truncations, {\sl J. Combin. Theory Ser. B} {\bf 137}, 291--315.
\bibitem{E1} G. Exoo and R. Jajcay, Recursive constructions of small regular graphs of
given degree and girth, {\sl Discrete Math.} {\bf 312} (2012), 2612--2619.
\bibitem{P1} F. Preparata and J. Vuillemin, The cube-connected cycles: a versatile network
for parallel computation, {\sl Comm. ACM} {\bf 24} (1981), 300--309.
\bibitem{R1} O. Reingold, S. Vadhan and A. Wigderson, Entropy waves, the zig-zag product, and new
constant-degree expanders, {\sl Ann. Math.} {\bf 155} (2002), 157--187.
\bibitem{S1} H. Sachs, Regular graphs with given girth and restricted circuits, {\sl J. London 
Math. Soc.} {\bf 38} (1963), 423--429.
\end{thebibliography}
\end{document}